\documentclass[a4paper,11pt,twoside]{article}

\usepackage{amsmath, amssymb, amsthm, ascmac}
\usepackage{color}
\newcommand{\red}{\color[rgb]{0,0,0}}

\usepackage[dvipdfmx]{graphicx}

\usepackage{mathptmx}
\newtheorem{thm}{Theorem}[section]
\newtheorem{prop}{Proposition}[section]
\newtheorem{lem}{Lemma}[section]
\newtheorem{cor}{Corollary}[section]

\newtheorem{defn}{Definition}[section]
\newtheorem{asA}{A}
\newtheorem{asB}{B}

\theoremstyle{definition}
\newtheorem{example}{Example}[section]
\newtheoremstyle{revisedexample}{\topsep}{\topsep}{\normalfont\color{black}}{}{\bfseries\red}{.}{.5em}{}
\theoremstyle{revisedexample}
\newtheorem{revisedexample}[example]{Example}
\theoremstyle{definition}
\newtheorem{remark}{Remark}[section]
\newtheoremstyle{revisedremark}{\topsep}{\topsep}{\normalfont\color{black}}{}{\bfseries\red}{.}{.5em}{}
\theoremstyle{revisedremark}
\newtheorem{revisedremark}[remark]{Remark}
\newtheoremstyle{revisedstatement}{\topsep}{\topsep}{\itshape\color{black}}{}{\bfseries\red}{.}{.5em}{}
\theoremstyle{revisedstatement}
\newtheorem{revisedcor}[cor]{Corollary}
\newtheorem{revisedasB}[asB]{B}
\newtheorem{revisedprop}[prop]{Proposition}
\newtheorem{reviseddefn}[defn]{Definition}
\theoremstyle{definition}

\numberwithin{equation}{section}
\allowdisplaybreaks

\def\R{\mathbb{R}}

\def\l{\left}
\def\r{\right}

\def\p{\partial}

\def\toP{\stackrel{p}{\longrightarrow}}
\def\toD{\stackrel{d}{\longrightarrow}}

\def\E{\mathbb{E}}

\def\wh#1{\widehat{#1}}
\def\ol#1{\overline{#1}}

\def\Var{\mathrm{Var}}
\def\Cov{\mathrm{Cov}}
\def\df{\mathrm{d}}

\newcommand{\bi}{\begin{itemize}}
\newcommand{\ei}{\end{itemize}}

\title{Local increment inference for time-inhomogeneous drift in Gaussian processes}
\author{Yasutaka Shimizu\footnote{Department of Applied Mathematics, Waseda University. E-mail:{\tt shimizu@waseda.jp}}}
\date{\today}

\begin{document}

\maketitle 
\begin{abstract}
We study statistical inference for deterministic drifts in Gaussian process models under high-frequency observations over an expanding time horizon.
Using a least squares-type contrast based on first-order increments, we establish consistency and asymptotic normality under conditions on drift accumulation and increment dependence.
{\red A key feature is that the convergence rate is determined jointly by the deterministic signal and the full covariance structure of the weighted Gaussian increments, rather than by local noise roughness alone.
For power and fixed-frequency periodic drifts under Gaussian and Ornstein--Uhlenbeck covariance kernels, we derive explicit convergence rates and limiting variances, revealing distinct regimes depending on the drift structure and, for periodic drifts, the noise spectrum.
These results clarify the respective roles of sampling frequency and observation horizon.}

\begin{flushleft}
{\it Keywords:} Gaussian processes, high-frequency data, time-inhomogeneous drift, convergence rates, dependent increments
\vspace{1mm}\\
{\it MSC2010:} {\bf 62M10}; 62F12,  60G15.
\end{flushleft}
\end{abstract} 

\section{Introduction}

Gaussian processes (GPs) provide a flexible probabilistic framework for modeling time-dependent phenomena with uncertainty quantification.
Because of their analytical tractability and nonparametric structure, they are widely used in statistics, machine learning, spatial statistics, and time series analysis.
See, for example, {\color{black}Ibragimov and Rozanov~\cite{ir78}} and {\color{black}Rasmussen and Williams~\cite{rw06}}.

In many practical situations, observed data exhibit both deterministic long-term trends and stochastic short-term fluctuations.
This motivates combining a deterministic drift that varies over time with a stationary Gaussian component.

In this paper, we consider a stochastic process of the form
\begin{align}
X_t = Z_t + \int_0^t \mu(s)\,\df s, \quad t\ge0, \label{true}
\end{align}
where $\mu$ is a deterministic mean density, referred to as the drift, and $Z=(Z_t)_{t\ge0}$ is a centered stationary Gaussian process with covariance kernel $K$ satisfying
\[
\E[Z_tZ_s]=K(|t-s|), \quad t,s\ge0.
\]
The process is observed at time points $t_i=ih_n\ ({\red i=0,1,\dots,n})$ under high-frequency sampling:
\[
h_n\to0, \quad T_n:=nh_n\to\infty\quad (n\to \infty).
\]
The primary objective of the present paper is statistical inference for the drift $\mu$ from high-frequency observations of the process $X$.

In many empirical analyses of Gaussian process models, the observed data are first centered by subtracting the sample mean and then treated as approximately mean-zero observations.
Such preprocessing is convenient for covariance analysis and kernel estimation, particularly in stationary settings.

{\red However, subtracting a single empirical mean does not generally remove a time-varying deterministic trend.
The centered observations may therefore retain a systematic time-dependent mean, so treating them as observations of a centered stationary process can obscure the distinction between deterministic structure and stochastic dependence.
This issue is especially relevant when the observation horizon diverges and the trend cannot be represented by a constant level.
We therefore formulate inference directly for the deterministic drift, rather than regarding it as a nuisance component to be removed by empirical centering.}

Our approach is based on the local increments
\[
\Delta_i^n X := X_{t_i}-X_{t_{i-1}},\quad i=1,2,\dots,n,
\]
and a least squares-type contrast, a standard tool for trend estimation.
{\red Subtracting a common empirical mean leaves these increments unchanged; the purpose of our approach is to model their time-dependent deterministic component explicitly.}

So far, parameter estimation for Gaussian processes has been extensively studied.
Classical approaches include maximum likelihood estimation, composite likelihood methods, and frequency-domain procedures; see {\color{black}Rasmussen and Williams~\cite{rw06}}, {\color{black}Varin et al.~\cite{vetal11}}, {\color{black}Whittle~\cite{w51}}, {\color{black}Fukasawa and Takabatake~\cite{ft16}}, and {\color{black}Takabatake~\cite{t22}}, among others.
Much of this literature focuses on covariance structures, spectral characteristics, or likelihood approximation techniques for centered Gaussian processes.

{\red Deterministic mean estimation also has a substantial regression literature.
For regression with dependent errors, {\color{black}Phillips~\cite{p22}} studies boundary effects caused by differencing in polynomial regression, and {\color{black}Konev and Pergamenchtchikov~\cite{kp10}} study periodic regression with Gaussian noise.}
Recent work by {\color{black}Kobayashi et al.~\cite{ketal25}} studied maximum likelihood estimation for mean functions of Gaussian processes under small-noise asymptotics through continuous-time likelihood analysis, and also developed M-estimators based on discrete observations.

The present paper uses a least squares-type contrast based on local increments and studies a different asymptotic regime: the observation horizon diverges and the sampling mesh shrinks, while the noise covariance remains fixed.
Specifying the mean function is often essential for predicting future phenomena from real data via Gaussian process models.
Rather than treating the deterministic trend solely as a nuisance component, we regard it as the primary statistical signal of interest.

Our methodology is closely related to local inference methods for diffusion-type processes.
In diffusion statistics, local increment structures play a fundamental role in constructing tractable estimators under high-frequency observations; see, for example, {\color{black}Kessler~\cite{k97}} and references therein.
The present paper extends a similar local inference philosophy to Gaussian process models with deterministic nonstationary drifts.
{\red We examine integrable, polynomial-type, and fixed-frequency periodic drift structures, distinguishing their deterministic accumulation properties from the additional covariance conditions needed for consistency and asymptotic normality.}

{\red A key feature of the proposed approach is that the asymptotic analysis is formulated directly through covariance structures of increments and Gaussian score distributions, without requiring a mixing-based central limit theorem.
The general results separate deterministic drift accumulation, bounds on increment dependence, and the limiting covariance of the full weighted increment sum.
In particular, off-diagonal covariance terms may cancel the diagonal contribution or change its order, so a convergence rate cannot be inferred from local increment variance alone.
Corollary~\ref{cor:full-score} allows the normalization to be based on a verified full score covariance scale without changing the estimator.}

{\red The concrete rate calculations are given in Propositions~\ref{prop:power-rates} and \ref{prop:periodic-rates} for power and fixed-frequency periodic drifts under Gaussian and Ornstein--Uhlenbeck (OU) noise.
For power drifts $\mu_\xi(t)=\xi(1+t)^m$ in the range $m\le1$ considered here, endpoint cancellation produces different rates for growing, constant, and decaying drifts, with a consistency boundary at $m=-1/2$.
For fixed-frequency periodic drifts, the normalization is $\sqrt{T_n}$ and the limiting covariance depends on the noise spectrum at the drift frequency.
Table~\ref{tab:drift-rates} summarizes these rates and sufficient mesh conditions.
Despite their different local smoothness, the Gaussian and OU kernels yield the same normalizations in these examples.
Thus the observation horizon, drift shape, and full increment covariance jointly govern stochastic accuracy, while the sampling mesh also controls discretization error.
The inconsistency result concerns the proposed increment estimator; it is not a general impossibility result, and no optimality claim is made.}

The remainder of this paper is organized as follows.
Section~\ref{sec:models} introduces the parametric model, the estimator, and its assumptions.
Section~\ref{sec:main} presents the general consistency and asymptotic normality results together with examples of drift accumulation and explicit convergence rate calculations.
Section~\ref{sec:conclusion} concludes the paper with several remarks and possible future extensions.
All proofs are collected in Section~\ref{sec:proofs}.

\subsection*{Notation}

We write $N(m,\Sigma)$ for a normal law and $GP(0,K)$ for a centered stationary Gaussian process with kernel $K$. The closure $\overline S$ is taken in the Euclidean norm. For positive sequences, $a_n\asymp b_n$ means that their ratio is bounded above and away from zero, and $a_n\sim b_n$ means $a_n/b_n\to1$. Parameter gradients are column vectors; $\partial_\xi^2$ denotes the Hessian.

\section{Models and Assumptions}\label{sec:models}

For the true process \eqref{true}, we consider a parametric model:
\begin{align}
X_t = Z_t + \int_0^t \mu_\xi(s)\,\df s, \quad X_0 = Z_0, 
\label{model}
\end{align}
where  $\mu_\xi:[0,\infty)\to\R$ is a deterministic drift depending on an unknown parameter $\xi\in \R^p$, 
and $Z=(Z_t)_{t\ge0}\sim GP(0,K)$ is a centered stationary Gaussian process with covariance kernel $K$.

We observe the process at $t_i = ih_n\ ({\red i=0,1,\dots,n})$, with {\it high-frequency sampling scheme}:
\begin{align}
h_n \to 0, \quad T_n:=nh_n \to \infty, \quad n\to\infty.
\label{sampling}
\end{align}

We assume that the drift belongs to a parametric family 
\[
\l\{\mu_\xi:[0,\infty)\to\R \,\Big|\, \xi\in\ol{\Xi}, \int_0^t |\mu_\xi(s)|\,\df s < \infty\ \mbox{for any $t>0$}\r\}, 
\]
where $\Xi\subset\R^p$ is an open bounded convex set.
The true parameter is denoted by
\[
\xi_0\in\Xi,\quad \mu_{\xi_0}\equiv \mu.
\]

For $Y=X,Z$, write $\Delta_i^nY=Y_{t_i}-Y_{t_{i-1}}$, $i=1,\ldots,n$. We impose the following assumptions on the Gaussian noise.

\begin{asA}\label{as:increment-covariance}
There exist constants $C>0$ and $\alpha\in(0,2]$ such that
\[
\sup_{1\le i\le n}\sum_{j=1}^n \left|\Cov(\Delta_i^nZ,\Delta_j^nZ)\right|\le C h_n^\alpha.
\]
\end{asA}

\begin{asA}\label{as:local-kernel}
There exist constants $\beta\in(0,2]$ and $c_K>0$ such that
\[
K(0)-K(t)=c_Kt^\beta+o(t^\beta),\quad t\downarrow0.
\]
\end{asA}

Assumption~A\ref{as:increment-covariance} is a short-range dependence condition for local increments.
It controls the cumulative dependence of $\Delta_i^nZ$ on the other increments at the same sampling scale.
For instance, suppose that $K\in C^2([0,\infty))$ {\red with $K'(0+)=0$} and $\int_0^\infty|\p_t^2K(t)|\,\mathrm{d}t<\infty$.
{\red For $k:=|i-j|\ge1$, stationarity yields
\[
\Cov(\Delta_i^nZ,\Delta_j^nZ)=2K(kh_n)-K((k-1)h_n)-K((k+1)h_n).
\]
By the integral representation of the second-order difference,
\[
\left|\Cov(\Delta_i^nZ,\Delta_j^nZ)\right|\le h_n\int_{(k-1)h_n}^{(k+1)h_n}|\p_t^2K(u)|\,\mathrm{d}u.
\]
Since each $u\ge0$ belongs to at most two of the intervals $[(k-1)h_n,(k+1)h_n]$, it follows that
\[
\sum_{k=1}^\infty\left|2K(kh_n)-K((k-1)h_n)-K((k+1)h_n)\right|\le 2h_n\int_0^\infty|\p_t^2K(u)|\,\mathrm{d}u=O(h_n).
\]
The condition $K'(0+)=0$ holds when the even extension of $K$ is differentiable at zero; one-sided differentiability alone does not imply it.
Hence, under these conditions, $|K(0)-K(h_n)|=O(h_n^2)$ and $\Var(\Delta_i^nZ)=O(h_n^2)$.
Consequently,
\begin{equation}
\sup_{1\le i\le n}\sum_{j=1}^n\left|\Cov(\Delta_i^nZ,\Delta_j^nZ)\right|=O(h_n).\label{short-range-c2}
\end{equation}
so that A\ref{as:increment-covariance} holds with $\alpha=1$.}

On the other hand, for the Ornstein-Uhlenbeck kernel $K(t)=\sigma^2\exp(-\lambda |t|)$, we have $\beta=1$ and $K(0)-K(t)\sim \sigma^2\lambda t$ as $t\downarrow0$.
Moreover, for $i\ne j$,
\begin{align*}
\Cov(\Delta_i^nZ,\Delta_j^nZ)=\sigma^2\left(2e^{-\lambda |i-j|h_n}-e^{-\lambda |i-j-1|h_n}-e^{-\lambda |i-j+1|h_n}\right). 
\end{align*}
For $|i-j|\ge1$, the right-hand side is bounded by $C h_n^2 e^{-c|i-j|h_n}$.
Therefore,
\begin{align}
\sum_{j\ne i}\left|\Cov(\Delta_i^nZ,\Delta_j^nZ)\right|\le C h_n^2\sum_{k=1}^\infty e^{-ckh_n}\le C h_n.\label{short-range-ou}
\end{align}
Thus A\ref{as:increment-covariance} holds with $\alpha=1$.
Under A\ref{as:local-kernel}, $\Var(Z_{t+h}-Z_t)=2c_Kh^\beta+o(h^\beta)$.

\begin{revisedremark}\label{rem:alpha-beta}
{A1 controls cumulative increment dependence through $\alpha$, whereas A2 controls single-increment variance through $\beta$. Smooth short-range kernels can have $(\alpha,\beta)=(1,2)$; OU noise has $(1,1)$. {Neither exponent alone determines the full score covariance required in B7.}}
\end{revisedremark}

Next, we shall make assumptions on the parametric drift family.

\begin{asB}\label{as:drift-cont}
For every $t\ge0$, the map $\xi\mapsto\mu_\xi(t)$ is continuous on $\ol{\Xi}$.
\end{asB}

\begin{asB}\label{as:drift-diff}
For every $t\ge0$, the map $\xi\mapsto\mu_\xi(t)$ is twice continuously differentiable on $\ol{\Xi}$.
\end{asB}

\begin{asB}\label{as:time-regularity}
There exists a constant $C>0$ such that
\[
\sup_{\xi\in\ol{\Xi}}\sup_{t\ge0}|\p_t\mu_\xi(t)|\le C.
\]
\end{asB}

\begin{asB}\label{as:separation}
There exists a deterministic sequence $a_n\to\infty$ and a nonnegative continuous function {\red $Q:\overline\Xi\times\overline\Xi\to[0,\infty)$} such that
\[
\sup_{\xi\in\overline{\Xi}}\left|\frac1{a_n}\sum_{i=1}^n\bigl(\mu_\xi(t_{i-1})-\mu_{\xi_0}(t_{i-1})\bigr)^2-Q(\xi,\xi_0)\right|\to0,\quad n\to \infty, 
\]
and that the limit function $Q$ meets the following condition: 
\[
Q(\xi,\xi_0)=0 \iff \xi=\xi_0\quad \mbox{on $\overline{\Xi}$}.
\]
\end{asB}

\begin{asB}\label{as:derivative-growth}
There exist a deterministic sequence $b_n\to\infty$ and a measurable function $M:[0,\infty)\to[0,\infty)$ such that
\[
\sup_{\xi\in\ol{\Xi}} |\p_\xi\mu_\xi(t)| \le M(t), \quad t\ge0,
\]
and
\[
\frac1{b_n}\sum_{i=1}^n M(t_{i-1})^2 = O(1)\quad n\to \infty.
\]
\end{asB}

The sequences $a_n$ and $b_n$ measure quadratic drift separation and parameter sensitivity, respectively.

\paragraph{\textcolor{black}{LSE-type contrast functions: }}

{\red Under the true parameter, the exact increment decomposition is
\[
\Delta_i^nX=h_n\mu_{\xi_0}(t_{i-1})+r_i^n+\Delta_i^nZ,
\quad r_i^n=\int_{t_{i-1}}^{t_i}\{\mu_{\xi_0}(s)-\mu_{\xi_0}(t_{i-1})\}\,\df s.
\]
Assumption~B\ref{as:time-regularity}, rather than local integrability alone, gives
\[
\sup_{i,\xi}\left|\int_{t_{i-1}}^{t_i}\mu_\xi(s)\,\df s-h_n\mu_\xi(t_{i-1})\right|
\le C\int_0^{h_n}u\,\df u=Ch_n^2/2.
\]
Consequently, the exact marginal distribution at the true parameter is
\[
\Delta_i^nX-h_n\mu_{\xi_0}(t_{i-1})
\sim {\cal N}\bigl(r_i^n,2\{K(0)-K(h_n)\}\bigr).
\]
At a candidate $\xi$, its mean is $r_i^n+h_n\{\mu_{\xi_0}(t_{i-1})-\mu_\xi(t_{i-1})\}$.
Ignoring the $O(h_n^2)$ discretization mean gives an approximate local Gaussian contrast. The marginal variance is independent of $\xi$, so this construction leads to least squares. It does not assert independence of the increments or an exactly centered residual.}

This motivates the following contrast function:
\begin{align}
H_n(\xi) := \sum_{i=1}^n\bigl(\Delta_i^n X-\mu_\xi(t_{i-1})h_n\bigr)^2.
\label{contrast}
\end{align}
The estimator is then defined by
\begin{align}
\wh{\xi}_n \in \arg\min_{\xi\in\ol{\Xi}} H_n(\xi).
\label{estimator}
\end{align}

\section{Main Results and Examples}

\subsection{Main Theorems}\label{sec:main}

We first give consistency of the estimator \eqref{estimator}.

\begin{thm}\label{thm:consistency}
Suppose Assumptions~A\ref{as:increment-covariance}, B\ref{as:drift-cont}, and B\ref{as:time-regularity}--B\ref{as:derivative-growth} hold.
Moreover, assume that
\begin{align}
a_n h_n^{2-\alpha}\to\infty;\quad \frac{nh_n^2}{a_n}\to 0;\quad b_n=O(a_n).
\label{balance-beta}
\end{align}
as $n\to\infty$.
Then, $\widehat{\xi}_n$ has the weak consistency:
\[
\widehat{\xi}_n\to^p\xi_0.
\]
\end{thm}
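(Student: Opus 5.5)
The plan is the standard argument for minimum-contrast estimators. I would normalize the contrast difference, show that it converges uniformly in probability to the deterministic limit $Q(\cdot,\xi_0)$ of B\ref{as:separation}, and then use identifiability and compactness of $\ol{\Xi}$.

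Write $D_i(\xi):=\mu_{\xi_0}(t_{i-1})-\mu_\xi(t_{i-1})$. The exact increment decomposition gives $\Delta_i^nX-\mu_\xi(t_{i-1})h_n=h_nD_i(\xi)+r_i^n+\Delta_i^nZ$. Expanding the square, define
\[
U_n(\xi):=\frac{H_n(\xi)-H_n(\xi_0)}{a_nh_n^2}
=\frac1{a_n}\sum_{i=1}^nD_i(\xi)^2+\frac{2}{a_nh_n}\sum_{i=1}^nD_i(\xi)r_i^n+\frac{2}{a_nh_n}\sum_{i=1}^nD_i(\xi)\Delta_i^nZ .
\]
Call the three terms $I_n(\xi)$, $II_n(\xi)$ and $S_n(\xi)$.

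\textbf{Deterministic terms.} By B\ref{as:separation}, $I_n\to Q(\cdot,\xi_0)$ uniformly on $\ol{\Xi}$, so in particular $\sup_\xi I_n(\xi)=O(1)$. For $II_n$, B\ref{as:time-regularity} gives $|r_i^n|\le Ch_n^2$. Cauchy--Schwarz then yields
\[
\sup_\xi|II_n(\xi)|\le \frac{2Ch_n}{a_n}\sqrt n\,\sup_\xi\Bigl(\sum_iD_i(\xi)^2\Bigr)^{1/2}=O\Bigl(\sqrt{nh_n^2/a_n}\Bigr)\to0,
\]
by the second condition in \eqref{balance-beta}.

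\textbf{Stochastic term, pointwise.} Let $\Gamma=(\Cov(\Delta_i^nZ,\Delta_j^nZ))_{i,j}$. This matrix is symmetric, and by A\ref{as:increment-covariance} its maximal absolute row sum is at most $Ch_n^\alpha$. The Schur test therefore gives operator norm $\|\Gamma\|\le Ch_n^\alpha$. For any deterministic weights $w_i$,
\[
\Var\Bigl(\sum_i w_i\Delta_i^nZ\Bigr)\le Ch_n^\alpha\sum_i w_i^2 .
\]
Taking $w_i=D_i(\xi)$ gives $\Var S_n(\xi)=O\bigl(1/(a_nh_n^{2-\alpha})\bigr)\to0$ by the first condition in \eqref{balance-beta}.

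\textbf{Stochastic term, uniformly in $\xi$.} I expect this to be the main obstacle. I would handle it with the Lipschitz structure plus Gaussian chaining, which avoids needing differentiability of $S_n$ itself.
\begin{itemize}
\item By convexity of $\Xi$ and the mean value theorem (B\ref{as:derivative-growth} presupposes $\partial_\xi\mu_\xi$ exists), $|D_i(\xi)-D_i(\xi')|\le M(t_{i-1})|\xi-\xi'|$.
\item Since $S_n(\xi)-S_n(\xi')$ is linear in $\Delta Z$ with weights $D_i(\xi)-D_i(\xi')$, the variance bound above applies. Combined with B\ref{as:derivative-growth} and $b_n=O(a_n)$,
\[
\E|S_n(\xi)-S_n(\xi')|^2\le \frac{C h_n^\alpha b_n}{a_n^2h_n^2}|\xi-\xi'|^2 \le \frac{C'}{a_nh_n^{2-\alpha}}|\xi-\xi'|^2 .
\]
\item $(S_n(\xi))_\xi$ is therefore a centered Gaussian process on the bounded set $\ol{\Xi}\subset\R^p$. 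Its canonical metric is dominated by $\varepsilon_n|\xi-\xi'|$ with $\varepsilon_n:=(a_nh_n^{2-\alpha})^{-1/2}\to0$.
\item Dudley's entropy bound gives $\E\sup_\xi|S_n(\xi)|\le \E|S_n(\xi_0)|+C\varepsilon_n\int_0^{\mathrm{diam}}\sqrt{p\log(1/u)}\,\df u=O(\varepsilon_n)$.
\end{itemize}
A Kolmogorov/Sobolev-type moment argument, using Gaussian hypercontractivity to control $L^q$ moments with $q>p$, would serve as a fallback. Either way, $\sup_\xi|U_n(\xi)-Q(\xi,\xi_0)|\to^p0$.

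\textbf{Conclusion.} By definition of $\wh\xi_n$, $U_n(\wh\xi_n)\le U_n(\xi_0)=0$. Hence
\[
Q(\wh\xi_n,\xi_0)\le \sup_\xi|U_n-Q|\to^p0 .
\]
Fix $\epsilon>0$. Since $Q(\cdot,\xi_0)$ is continuous on the compact set $\ol{\Xi}$ and vanishes only at $\xi_0$, we have $\delta:=\inf_{|\xi-\xi_0|\ge\epsilon}Q(\xi,\xi_0)>0$. Therefore $P(|\wh\xi_n-\xi_0|\ge\epsilon)\le P(\sup|U_n-Q|\ge\delta)\to0$, which is the claimed weak consistency. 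Measurability of the argmin can be handled by a measurable selection, or by working with outer probability.
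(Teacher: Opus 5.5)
Your proposal is correct and follows essentially the same route as the paper. It uses the same three-term expansion of $(H_n(\xi)-H_n(\xi_0))/(a_nh_n^2)$, Cauchy--Schwarz with $|r_i^n|\le Ch_n^2$ for the discretization term, a Lipschitz bound on the canonical metric at scale $(a_nh_n^{2-\alpha})^{-1/2}$ via B5 and $b_n=O(a_n)$ followed by Dudley's entropy inequality for the Gaussian term, and the well-separated-minimum argument, where the paper simply cites van der Vaart's Theorem~5.7. Your Schur-test step makes explicit the bound $\Var(\sum_i w_i\Delta_i^nZ)\le Ch_n^\alpha\sum_i w_i^2$ that the paper uses implicitly, and the anchor term $\E|S_n(\xi_0)|$ in your Dudley bound is in fact zero because $D_i(\xi_0)=0$.
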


\begin{remark}
The balance $b_n=O(a_n)$ controls the stochastic term uniformly over the parameter set; see Section~\ref{sec:proofs}. The drift examples below explain when this balance holds or fails.
\end{remark}

We next establish asymptotic normality.
To formulate the limiting covariance structure, define
\begin{align}
I_n := \frac1{a_n}\sum_{i=1}^n\p_\xi\mu_{\xi_0}(t_{i-1})\p_\xi\mu_{\xi_0}(t_{i-1})^\top.
\label{information}
\end{align}
where $a_n$ is the sequence given in B\ref{as:separation}, and we shall make further assumptions as follows. 

\begin{asB}\label{as:information}
There exists a positive definite matrix $I(\xi_0)$ such that
\[
I_n\to I(\xi_0),\quad n\to \infty.
\]
\end{asB}

\begin{asB}\label{as:score-information}
There exists a positive definite matrix $\Gamma(\xi_0)$ such that
\[
\frac1{a_nh_n^\beta}\sum_{i,j=1}^n\partial_\xi\mu_{\xi_0}(t_{i-1})\partial_\xi\mu_{\xi_0}(t_{j-1})^\top\Cov(\Delta_i^nZ,\Delta_j^nZ)\to\Gamma(\xi_0),\quad n\to \infty.
\]
\end{asB}

\begin{revisedasB}\label{as:second-derivative-growth}
There exists a neighborhood $U$ of $\xi_0$ and a measurable function $N:[0,\infty)\to[0,\infty)$ such that
\[
\sup_{\xi\in U}|\p_\xi^2\mu_\xi(t)|\le N(t), \quad t\ge0,
\]
and
\[
\frac1{a_n}\sum_{i=1}^n N(t_{i-1})^2=O(1),\quad n\to \infty.
\]
{In addition, take $U$ to be a {bounded convex neighborhood} and assume that there exist $\rho\in(0,1]$ and $L:[0,\infty)\to[0,\infty)$ such that
\[
|\p_\xi^2\mu_\xi(t)-\p_\xi^2\mu_\eta(t)|\le L(t)|\xi-\eta|^\rho,
\quad \xi,\eta\in U,\qquad
\frac1{a_n}\sum_{i=1}^n L(t_{i-1})^2=O(1).
\]
This local uniform regularity is automatic for drift families linear in the parameter, with $N=L=0$.}
\end{revisedasB}

{\red B\ref{as:score-information} is an additional full-covariance assumption, not a consequence of A2 and B6. Off-diagonal terms may cancel the diagonal contribution or change its order. The normalization in Theorem~\ref{thm:clt} is conditional on B7; Corollary~\ref{cor:full-score} below allows a different verified score scale without changing the estimator.}

The next theorem gives asymptotic normality of the estimator.

\begin{thm}\label{thm:clt}
Suppose Assumptions~A\ref{as:increment-covariance}, A\ref{as:local-kernel}, and B\ref{as:drift-cont}--B\ref{as:second-derivative-growth} hold.
Let $a_n$ and $b_n$ be the sequences appearing in Assumptions~B\ref{as:separation} and B\ref{as:derivative-growth}, respectively, and assume that $b_n=O(a_n)$. Furthermore, assume that 
\begin{align}
a_n h_n^{2-(\alpha\wedge \beta)}\to\infty;\quad  \frac{nh_n^2}{a_n}\to0;\quad nh_n^{4-\beta}\to0. \label{asymp.normal-rate}
\end{align}
as $n\to \infty$.  Then,
\[
\sqrt{a_nh_n^{2-\beta}}(\widehat{\xi}_n-\xi_0)\toD {\cal N}\left(0,I(\xi_0)^{-1}\Gamma(\xi_0)I(\xi_0)^{-1}\right).
\]
\end{thm}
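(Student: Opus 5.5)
We follow the standard M-estimation route for least squares contrasts: a Taylor expansion of the score around $\xi_0$, with the rate read off from the Gaussian score. By Theorem~\ref{thm:consistency}, whose conditions are implied by \eqref{asymp.normal-rate} since $a_nh_n^{2-\alpha}\ge a_nh_n^{2-(\alpha\wedge\beta)}$ for $h_n<1$, we have $\wh\xi_n\toP\xi_0\in\Xi$. Hence with probability tending to one $\wh\xi_n$ lies in the convex neighborhood $U$ of B\ref{as:second-derivative-growth} and satisfies $\p_\xi H_n(\wh\xi_n)=0$. Writing
\[
\p_\xi H_n(\xi)=-2h_n\sum_{i=1}^n\p_\xi\mu_\xi(t_{i-1})\bigl(\Delta_i^nX-h_n\mu_\xi(t_{i-1})\bigr),
\]
the mean value theorem gives
\[
0=\p_\xi H_n(\xi_0)+\int_0^1\p_\xi^2H_n\bigl(\xi_0+u(\wh\xi_n-\xi_0)\bigr)\,\df u\,(\wh\xi_n-\xi_0).
\]
We will show two things:
\[
\frac{1}{2h_n^2a_n}\p_\xi^2H_n(\tilde\xi_n)\toP I(\xi_0)
\]
uniformly over $\tilde\xi_n$ between $\wh\xi_n$ and $\xi_0$, and
\[
-\frac{1}{2h_n\sqrt{a_nh_n^{\beta}}}\p_\xi H_n(\xi_0)\toD\mathcal N(0,\Gamma(\xi_0)).
\]
Since
\[
\frac{h_n^2a_n}{h_n\sqrt{a_nh_n^\beta}}=\sqrt{a_nh_n^{2-\beta}},
\]
the conclusion follows by Slutsky's lemma.

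\textbf{Score.} At $\xi_0$ we have $\Delta_i^nX-h_n\mu_{\xi_0}(t_{i-1})=r_i^n+\Delta_i^nZ$ with $|r_i^n|\le Ch_n^2$ by B\ref{as:time-regularity}. The stochastic part
\[
S_n:=(a_nh_n^\beta)^{-1/2}\sum_i\p_\xi\mu_{\xi_0}(t_{i-1})\Delta_i^nZ
\]
is exactly Gaussian with mean zero and covariance equal to the left side of B\ref{as:score-information}. Therefore $S_n\toD\mathcal N(0,\Gamma(\xi_0))$ without any mixing CLT, since a sequence of Gaussian vectors converges once its covariances converge. For the bias part, Cauchy--Schwarz and B\ref{as:derivative-growth} with $b_n=O(a_n)$ give
\[
(a_nh_n^\beta)^{-1/2}\sum_i|\p_\xi\mu_{\xi_0}(t_{i-1})|\,Ch_n^2
\le C h_n^{2-\beta/2}\sqrt{n\,b_n/a_n}=O\bigl(\sqrt{nh_n^{4-\beta}}\bigr)\to0 .
\]
This is exactly the role of the third condition in \eqref{asymp.normal-rate}.

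\textbf{Hessian.} We have
\[
\frac{1}{2h_n^2}\p_\xi^2H_n(\xi)=\sum_i\p_\xi\mu_\xi\,\p_\xi\mu_\xi^\top(t_{i-1})-h_n^{-1}\sum_i\p_\xi^2\mu_\xi(t_{i-1})\bigl(\Delta_i^nX-h_n\mu_\xi(t_{i-1})\bigr).
\]
Divided by $a_n$, the first term at $\xi_0$ tends to $I(\xi_0)$ by B\ref{as:information}. Its deviation at $\tilde\xi_n$ is bounded by $a_n^{-1}\sum_i 2M N(t_{i-1})\,|\tilde\xi_n-\xi_0|$, which is $o_p(1)$ by Cauchy--Schwarz with B\ref{as:derivative-growth} and B\ref{as:second-derivative-growth}. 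For the second term, write $\Delta_i^nX-h_n\mu_\xi=h_n(\mu_{\xi_0}-\mu_\xi)+r_i^n+\Delta_i^nZ$. The drift-difference piece is bounded by $a_n^{-1}\sum_iN\,M(t_{i-1})\,|\tilde\xi_n-\xi_0|=o_p(1)$. The $r_i^n$ piece is bounded by $Ch_n a_n^{-1}\sum_i N\le C\sqrt{nh_n^2/a_n}\to0$.

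\textbf{Main obstacle.} The hard part is the noise piece
\[
\frac{1}{a_nh_n}\sum_i\p_\xi^2\mu_{\tilde\xi}(t_{i-1})\Delta_i^nZ
\]
evaluated at the random point $\tilde\xi$. At a fixed $\xi$ its variance is at most
\[
(a_nh_n)^{-2}\,\sup_i\sum_j|\Cov(\Delta_i^nZ,\Delta_j^nZ)|\sum_iN(t_{i-1})^2\le C(a_nh_n^{2-\alpha})^{-1}\to0
\]
by A\ref{as:increment-covariance} and the Schur bound. To make this uniform over $U$, we plan to use the H\"older condition in B\ref{as:second-derivative-growth}, which gives a Gaussian process indexed by $\xi\in U$ with increment variance bounded by $C|\xi-\eta|^{2\rho}(a_nh_n^{2-\alpha})^{-1}$. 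A Kolmogorov/Dudley chaining bound on the bounded set $U\subset\R^p$ then yields $\sup_{\xi\in U}|\cdot|=O_p((a_nh_n^{2-\alpha})^{-1/2})=o_p(1)$. An alternative is to split this term into its value at $\xi_0$ plus the H\"older remainder $\sup|\cdot|\le|\tilde\xi-\xi_0|^\rho\,(a_nh_n)^{-1}\sum_iL|\Delta_i^nZ|$, but the crude $L^1$ bound $(a_nh_n)^{-1}\sum_iL\,h_n^{\beta/2}\le C\sqrt{n/(a_nh_n^{2-\beta})}$ need not vanish. So chaining is the route we would pursue, and it is here that $a_nh_n^{2-\alpha}\to\infty$ enters. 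Combining the score and Hessian limits with invertibility of $I(\xi_0)$ completes the proof.
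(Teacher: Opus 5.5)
Your proposal is correct and follows the paper's proof essentially step for step. You obtain consistency from Theorem~\ref{thm:consistency}, then Taylor-expand the score: the score term is exactly Gaussian with covariance given by B\ref{as:score-information}, and the bias is controlled by $nh_n^{4-\beta}\to0$. The Hessian is handled as in Lemma~\ref{lem:hessian-xi}, using Dudley chaining for the H\"older-regular Gaussian field $\xi\mapsto(a_nh_n)^{-1}\sum_i\partial_\xi^2\mu_\xi(t_{i-1})\Delta_i^nZ$, and that is exactly where $a_nh_n^{2-\alpha}\to\infty$ enters.
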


\begin{revisedremark}
{When $\alpha=\beta$, the exact conditions in \eqref{asymp.normal-rate} remain
\[
a_nh_n^{2-\alpha}\to\infty,\qquad nh_n^2/a_n\to0,\qquad nh_n^{4-\alpha}\to0.
\]
The stronger pair $a_nh_n^{2-\alpha}\to\infty$ and $nh_n^2\to0$ is sufficient, but is {not in general equivalent} to these conditions.
For the smooth kernels below, $\alpha=1$, $\beta=2$, so the conditions are
\[
a_nh_n\to\infty,\qquad nh_n^2/a_n\to0,\qquad nh_n^2\to0.
\]
{In every case, B7 must also be verified.}}
\end{revisedremark}

{\red The same contrast also admits the following extension, which keeps deterministic separation and the full score variance distinct.}
\begin{revisedcor}\label{cor:full-score}
{Suppose A\ref{as:increment-covariance}, B\ref{as:drift-cont}--B\ref{as:information}, and B\ref{as:second-derivative-growth} hold, with $b_n=O(a_n)$ and
\[
a_nh_n^{2-\alpha}\to\infty,\qquad nh_n^2/a_n\to0.
\]
In place of B7, suppose that for some $v_n>0$,
\[
\frac1{v_n}\Var\left(\sum_i\partial_\xi\mu_{\xi_0}(t_{i-1})\Delta_i^nZ\right)\to\Gamma_*>0,
\qquad R_n:=\frac{a_nh_n}{\sqrt{v_n}}\to\infty,
\]
and $na_nh_n^4/v_n\to0$. Then the estimator \eqref{estimator}, with the original left-endpoint contrast, satisfies
\[
R_n(\widehat\xi_n-\xi_0)\toD N(0,I(\xi_0)^{-1}\Gamma_*I(\xi_0)^{-1}).
\]
{The scale $v_n$ need not diverge.} Setting $v_n=a_nh_n^\beta$ recovers the score normalization of Theorem~\ref{thm:clt}.}
\end{revisedcor}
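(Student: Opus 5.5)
Consistency is available from Theorem~\ref{thm:consistency}: its hypotheses (A\ref{as:increment-covariance}, B\ref{as:drift-cont}, B\ref{as:time-regularity}--B\ref{as:derivative-growth}, $a_nh_n^{2-\alpha}\to\infty$, $nh_n^2/a_n\to0$, $b_n=O(a_n)$) are all assumed here. So $\widehat\xi_n\to^p\xi_0$, and with probability tending to one $\widehat\xi_n$ lies in the interior neighborhood $U$ of B\ref{as:second-derivative-growth} and solves $\partial_\xi H_n(\widehat\xi_n)=0$. We then Taylor expand the score around $\xi_0$ on the convex set $U$:
\[
0=\partial_\xi H_n(\xi_0)+\int_0^1\partial_\xi^2H_n\bigl(\xi_0+u(\widehat\xi_n-\xi_0)\bigr)\,\df u\,(\widehat\xi_n-\xi_0).
\]
Multiplying by $R_n/(a_nh_n^2)$ turns this into a standardized score times an inverse Hessian.

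\textbf{Score.} Write $\Delta_i^nX-h_n\mu_{\xi_0}(t_{i-1})=r_i^n+\Delta_i^nZ$. Then
\[
-\tfrac12\partial_\xi H_n(\xi_0)=h_n\sum_i\partial_\xi\mu_{\xi_0}(t_{i-1})\Delta_i^nZ+h_n\sum_i\partial_\xi\mu_{\xi_0}(t_{i-1})r_i^n .
\]
Dividing by $a_nh_n^2/R_n=h_n\sqrt{v_n}$ gives two pieces.
\begin{itemize}
\item The stochastic piece is $v_n^{-1/2}\sum_i\partial_\xi\mu_{\xi_0}(t_{i-1})\Delta_i^nZ$. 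It is exactly Gaussian with mean zero, being a linear functional of the Gaussian vector $(\Delta_i^nZ)$. By the assumed covariance convergence its covariance tends to $\Gamma_*$, so it converges in law to $N(0,\Gamma_*)$. No CLT argument is needed; Gaussianity does the work.
\item The bias piece uses $|r_i^n|\le Ch_n^2$ (B\ref{as:time-regularity}), Cauchy--Schwarz and B\ref{as:information}. It is bounded by
\[
C h_n\bigl(n\textstyle\sum_i|\partial_\xi\mu_{\xi_0}(t_{i-1})|^2\bigr)^{1/2}/\sqrt{v_n}=O\bigl((na_nh_n^4/v_n)^{1/2}\bigr)\to0 .
\]
\end{itemize}

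\textbf{Hessian.} Compute
\[
\tfrac12\partial_\xi^2H_n(\xi)=h_n^2\sum_i\partial_\xi\mu_\xi\partial_\xi\mu_\xi^\top(t_{i-1})-h_n\sum_i\partial_\xi^2\mu_\xi(t_{i-1})\bigl(\Delta_i^nX-h_n\mu_\xi(t_{i-1})\bigr).
\]
We need $(a_nh_n^2)^{-1}\partial_\xi^2H_n(\tilde\xi_n)\to^p 2I(\xi_0)$ uniformly over intermediate points $\tilde\xi_n\to^p\xi_0$.

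The first sum equals $I(\xi_0)+o(1)$. To show this, we compare $\partial_\xi\mu_\xi$ with $\partial_\xi\mu_{\xi_0}$ by the mean value theorem with $N$. Cauchy--Schwarz with $a_n^{-1}\sum M^2=O(1)$ (from $b_n=O(a_n)$) and $a_n^{-1}\sum N^2=O(1)$ then gives an error of $O(|\xi-\xi_0|)$.

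For the second sum, split the residual into three parts:
\begin{itemize}
\item $\Delta_i^nZ$;
\item $r_i^n$;
\item $h_n(\mu_{\xi_0}-\mu_\xi)(t_{i-1})$, which is bounded by $h_nM(t_{i-1})|\xi-\xi_0|$.
\end{itemize}
The deterministic parts, after dividing by $a_nh_n^2$, are $O\bigl((nh_n^2/a_n)^{1/2}\bigr)+O(|\xi-\xi_0|)$ by Cauchy--Schwarz and the $N$-bound.

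The noise part at $\xi_0$ is $(a_nh_n)^{-1}\sum_i\partial_\xi^2\mu_{\xi_0}(t_{i-1})\Delta_i^nZ$. Its variance is at most
\[
(a_nh_n)^{-2}\,Ch_n^\alpha\sum_iN(t_{i-1})^2=O\bigl((a_nh_n^{2-\alpha})^{-1}\bigr)\to0,
\]
using A\ref{as:increment-covariance} through the Schur bound $\sum_{i,j}|c_i||c_j||\Cov|\le Ch_n^\alpha\sum c_i^2$.

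\textbf{Main obstacle.} The hardest step is making the noise part uniform in $\xi$ near $\xi_0$. The plan is to use the Hölder condition with $L$ and a chaining bound. The process
\[
\xi\mapsto (a_nh_n)^{-1}\sum_i\bigl(\partial_\xi^2\mu_\xi-\partial_\xi^2\mu_{\xi_0}\bigr)(t_{i-1})\Delta_i^nZ
\]
is Gaussian with increment variances $O\bigl((a_nh_n^{2-\alpha})^{-1}|\xi-\eta|^{2\rho}\bigr)$, by the same Schur bound with $L$. Dudley's entropy bound on the bounded set $U\subset\R^p$ then gives a supremum of $O_p\bigl((a_nh_n^{2-\alpha})^{-1/2}\bigr)\to0$.

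\textbf{Conclusion.} The Hessian term converges to $2I(\xi_0)$, which is invertible by B\ref{as:information}. Slutsky's lemma then yields $R_n(\widehat\xi_n-\xi_0)\toD I(\xi_0)^{-1}N(0,\Gamma_*)$, which is the stated limit $N(0,I(\xi_0)^{-1}\Gamma_*I(\xi_0)^{-1})$.

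To recover Theorem~\ref{thm:clt}, set $v_n=a_nh_n^\beta$. Then $R_n=\sqrt{a_nh_n^{2-\beta}}$, the covariance condition is B\ref{as:score-information}, and $na_nh_n^4/v_n=nh_n^{4-\beta}\to0$.
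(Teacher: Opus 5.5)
Your proposal is correct and follows the paper's route. It takes consistency from Theorem~\ref{thm:consistency}, uses the same Taylor expansion of the score with the Gaussian term normalized by $\sqrt{v_n}$, and kills the deterministic remainder with $na_nh_n^4/v_n\to0$. It also handles the Hessian as in Lemma~\ref{lem:hessian-xi}, via an anchored Dudley bound that uses A1 and B8 but not B7. There is one small slip: the prefactor in your bias bound should be $Ch_n^2$, not $Ch_n$, to give the stated $O((na_nh_n^4/v_n)^{1/2})$. Bounding $\sum_i|\partial_\xi\mu_{\xi_0}(t_{i-1})|^2=O(a_n)$ through B6, rather than through B5 with $b_n=O(a_n)$ as the paper does, is equally valid.
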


\subsection{Examples}\label{sec:example}

\subsubsection{Examples of covariance kernels}

\begin{example}[Gaussian kernel]\label{ex:gauss-kernel}
Consider the Gaussian kernel
\[
K(t)=a\exp\left(-\frac{b}{2}t^2\right), \quad a,b>0, 
\]
which satisfies A\ref{as:increment-covariance} with {\red $\alpha=1$} due to \eqref{short-range-c2}. 
Moreover, since
\[
K(0)-K(t)=a\left(1-\exp\left(-\frac{b}{2}t^2\right)\right)=\frac{ab}{2}t^2+o(t^2), \quad t\downarrow0,
\]
A\ref{as:local-kernel} holds with
\[
{\red \alpha=1},\quad \beta=2, \quad c_K=\frac{ab}{2}.
\]
\end{example}

\begin{revisedexample}[Mat\'ern kernel]
Consider the Mat\'ern kernel
\[
K(t)=a\frac{2^{1-\nu}}{\Gamma(\nu)}\left(\sqrt{2\nu}\,b|t|\right)^\nu B_\nu\left(\sqrt{2\nu}\,b|t|\right),
\]
where $a,b,\nu>0$ and $B_\nu$ denotes the modified Bessel function of the second kind.

{ Put $\lambda=\sqrt{2\nu}\,b$. The small-argument Bessel expansion gives, for $\nu>1$,
\[
K(0)-K(t)=\frac{a\lambda^2}{4(\nu-1)}t^2+o(t^2),
\quad \alpha=1,\quad\beta=2,\quad c_K=\frac{ab^2\nu}{2(\nu-1)}.
\]
For $0<\nu<1$,
\[
K(0)-K(t)\sim c_Kt^{2\nu},\qquad
c_K=\frac{a\Gamma(1-\nu)}{\nu\Gamma(\nu)}\left(\frac\lambda2\right)^{2\nu},
\quad \beta=2\nu,\quad\alpha=\min(2\nu,1).
\]
Indeed, for $0<\nu<1/2$, the diagonal and the lag-one covariance are $O(h_n^{2\nu})$. For lags $k\ge2$, the integral second-difference bound and $|K''(t)|\le Ct^{2\nu-2}$ near zero give a row sum bounded by
\[
C h_n^{2\nu}+C h_n\int_{h_n}^1u^{2\nu-2}\,\df u+O(h_n)=O(h_n^{2\nu}).
\]
For $\nu>1/2$, $K''$ is integrable at zero and at infinity, so the row sum is $O(h_n)$. At $\nu=1/2$ the kernel is exponential, and \eqref{short-range-ou} applies.
At $\nu=1$, $K(0)-K(t)\sim ab^2t^2\log(1/t)$; hence A2 in its pure-power form does not hold, although A1 holds with $\alpha=1$. This boundary case is excluded from Theorem~\ref{thm:clt} as stated.
The expansions follow from the series for the modified Bessel functions; see {\color{black}NIST~\cite{dlmf}}.}
\end{revisedexample}

\begin{example}[Rational quadratic kernel]
Consider the rational quadratic kernel
\[
K(t)=a\left(1+\frac{b^2t^2}{2\gamma}\right)^{-\gamma}, \quad a,b,\gamma>0.
\]
Using Taylor expansion,
\[
K(0)-K(t)=\frac{ab^2}{2}t^2+o(t^2), \quad t\downarrow0.
\]
Hence, with \eqref{short-range-c2}, 
\[
{\red \alpha=1},\quad\beta=2, \quad c_K=\frac{ab^2}{2}.
\]
\end{example}

\begin{example}[Ornstein-Uhlenbeck kernel]\label{ex:ou-kernel}
Consider the exponential kernel
\[
K(t)=ae^{-b|t|}, \quad a,b>0.
\]
Note that $\alpha=1$ from \eqref{short-range-ou}, and that 
\[
K(0)-K(t)=a(1-e^{-b|t|})=ab|t|+o(|t|), \quad t\downarrow0.
\]
Therefore,
\[
\alpha = \beta=1, \quad c_K=ab.
\]
This example illustrates that the present framework naturally includes non-smooth Gaussian kernels.
\end{example}

\subsubsection{Examples of drift accumulation rates}

We next consider several representative drift families satisfying the series of Assumptions~B\ref{as:separation} and B\ref{as:derivative-growth}.

\begin{revisedexample}[Directly Riemann integrable drifts]
We first consider the directly Riemann integrable (DRI) case, which corresponds to localized drift signals.

\begin{reviseddefn}
A nonnegative measurable function $f:[0,\infty)\to[0,\infty)$ is said to be directly Riemann integrable (DRI) if
\[
\lim_{\delta\downarrow0}\delta\sum_{k=1}^\infty\sup_{t\in[(k-1)\delta,k\delta]}f(t)=\lim_{\delta\downarrow0}\delta\sum_{k=1}^\infty\inf_{t\in[(k-1)\delta,k\delta]}f(t)<\infty.
\]
See {\color{black}Feller~\cite{f71}} for general properties of directly Riemann integrable functions.
\end{reviseddefn}

{ Write $f_\xi(t)=\{\mu_\xi(t)-\mu_{\xi_0}(t)\}^2$.
Assume that each $f_\xi$ is DRI, and that there is a DRI function $G\ge0$ such that
\[
|f_\xi(t)-f_\eta(t)|\le G(t)|\xi-\eta|,\qquad \xi,\eta\in\overline\Xi.
\]
Assume also that $Q(\xi,\xi_0):=\int_0^\infty f_\xi(t)\,\df t$ is zero only at $\xi=\xi_0$.
For fixed $\xi$, direct Riemann integrability and $T_n\to\infty$ give
\[
h_n\sum_{i=1}^nf_\xi(t_{i-1})\to Q(\xi,\xi_0).
\]
The discrete sums are uniformly Lipschitz in $\xi$, since $h_n\sum_iG(t_{i-1})=O(1)$; the integral limit is Lipschitz as well. A finite covering of the compact parameter set therefore makes this convergence uniform, proving B4 with $a_n=h_n^{-1}$.
If $M^2$ is DRI, B5 holds with $b_n=h_n^{-1}$.
A sufficient condition for the envelope above is the parameter derivative bound B5 with DRI $M^2$, since convexity yields $G=2\operatorname{diam}(\overline\Xi)M^2$.
Examples include $\mu_\xi(t)=e^{-\xi t}$ and $e^{-\xi t^2}$ when the closure of the parameter interval is contained in $(0,\infty)$.

These are accumulation examples, not automatic applications of the asymptotic normality theorem. For both the Gaussian and OU kernels, $\alpha=1$, and $a_nh_n=1$ fails the sufficient consistency condition. No convergence rate follows from Theorem~\ref{thm:clt} in this case.}
\end{revisedexample}

\begin{revisedexample}[Polynomial-type drifts]\label{ex:polynomial}
{ For $q\in\R$, set
\[
A_q(T):=\int_0^T(1+t)^{2q}\,\df t
\asymp\begin{cases}
T^{2q+1},&q>-1/2,\\
\log T,&q=-1/2,\\
1,&q<-1/2.
\end{cases}
\]
If the squared drift separation has polynomial order $(1+t)^{2q}$ and the squared derivative envelope has order $(1+t)^{2r}$, their respective accumulation scales are $h_n^{-1}A_q(T_n)$ and $h_n^{-1}A_r(T_n)$. In particular, $r\le q$ gives $b_n=O(a_n)$. Order comparisons alone do not establish the uniform limit and identifiability required by B4.

A concrete family verifying B4--B6 is
\[
\mu_\xi(t)=\xi(1+t)^q,\qquad \xi\in\Xi\subset\R,
\]
with fixed $q$. Choose $a_n=b_n=h_n^{-1}A_q(T_n)$. Monotonicity of $(1+t)^{2q}$ bounds the error of the left Riemann sum by $h_n|1-(1+T_n)^{2q}|$, which is $o(A_q(T_n))$. Thus
\[
\frac1{a_n}\sum_{i=1}^n\{\mu_\xi(t_{i-1})-\mu_{\xi_0}(t_{i-1})\}^2
\to(\xi-\xi_0)^2
\]
uniformly on $\overline\Xi$, and $I(\xi_0)=1$. B3 holds for $q\le1$; for $q>1$ the bounded time-derivative assumption fails. B8 holds with zero envelopes because the family is linear in $\xi$.
For $\alpha=1$, the sufficient consistency conditions become $A_q(T_n)\to\infty$ and $T_nh_n^2/A_q(T_n)\to0$. They fail in the localized case $q<-1/2$.
The expression $\{A_q(T_n)h_n^{1-\beta}\}^{1/2}$ is only the candidate normalization under B7; no estimation rate is inferred from accumulation alone. The full covariance calculation is given in Section~\ref{sec:combined}.}
\end{revisedexample}

\begin{revisedexample}[Periodic-type drifts]\label{ex:periodic}
{ Consider a fixed period $P>0$ and a drift family jointly continuous on $\overline\Xi\times[0,P]$, periodic in time, with bounded time and parameter derivatives. Suppose
\[
Q(\xi,\xi_0)=\frac1P\int_0^P\{\mu_\xi(t)-\mu_{\xi_0}(t)\}^2\,\df t
\]
is zero only at $\xi_0$. Uniform Riemann approximation and the bounded contribution of an incomplete period prove B4 with $a_n=n$; B5 holds with $b_n=n$.
For example, for known $\omega>0$,
\[
\mu_\xi(t)=\xi_1\sin(\omega t)+\xi_2\cos(\omega t),
\qquad Q(\xi,\xi_0)=\tfrac12|\xi-\xi_0|^2,\quad I(\xi_0)=\tfrac12\mathrm{Id}_2.
\]
For kernels with $\alpha=1$, Theorem~\ref{thm:consistency} applies because $a_nh_n=T_n\to\infty$ and $nh_n^2/a_n=h_n^2\to0$. The asymptotic normality and its rate require the calculation below.

Unknown-frequency families, such as $\mu_\xi(t)=\sin(\xi t)$ on a positive bounded interval, are outside these assumptions. For each fixed $\xi\ne\xi_0$, the normalized continuous squared separation tends to $1$, whereas at $\xi_0$ it is $0$. This discontinuous pointwise limit cannot be the continuous uniform limit required by B4. In addition, parameter derivative accumulation has order $h_n^{-1}T_n^3$, so $b_n=O(a_n)$ fails when $a_n=n$.}
\end{revisedexample}

\subsubsection{{\red Drift-specific convergence rates and limiting variances}}\label{sec:combined}

{\red Set $w_{i,n}=\p_\xi\mu_{\xi_0}(t_{i-1})$ and
\[
V_n:=\Var\left(\sum_{i=1}^nw_{i,n}\Delta_i^nZ\right)
=\sum_{i,j=1}^nw_{i,n}w_{j,n}^\top\Cov(\Delta_i^nZ,\Delta_j^nZ).
\]
B7 requires $V_n/(a_nh_n^\beta)\to\Gamma(\xi_0)>0$ in the positive definite sense. Neither B5 nor B6 determines $V_n$; the following calculations retain the off-diagonal terms.}

\begin{revisedexample}[Polynomial drift with Gaussian and Ornstein-Uhlenbeck kernels]\label{ex:poly-cov}
Consider the polynomial drift
\[
\mu_\xi(t)=\xi(1+t)^m,\quad \xi\in\Xi\subset\R,
\]
{ where $m\le1$ is fixed, so that B3 holds. We retain the left-endpoint contrast \eqref{contrast}. Put $g(t)=(1+t)^m$, $D_n=\sum_i g(t_{i-1})^2$, and
\[
A_m(T)=\int_0^T(1+t)^{2m}\,\df t,\qquad a_n=b_n=h_n^{-1}A_m(T_n).
\]
Then $h_nD_n\sim A_m(T_n)$, and B4 and B6 hold with $Q(\xi,\xi_0)=(\xi-\xi_0)^2$ and $I(\xi_0)=1$. }
{
\begin{revisedprop}[Power-drift rate classification]\label{prop:power-rates}
For $K(t)=ae^{-bt^2/2}$ or $K(t)=ae^{-b|t|}$, $a,b>0$, let $h_n\to0$, $T_n\to\infty$, and $\xi_0\in\Xi$. For the original estimator \eqref{estimator},
\begin{align*}
T_n^{m+1}(\widehat\xi_n-\xi_0)&\toD N(0,K(0)(2m+1)^2),&&0<m\le1,\\
T_n(\widehat\xi_n-\xi_0)&\toD N(0,2K(0)),&&m=0,\\
T_n^{2m+1}(\widehat\xi_n-\xi_0)&\toD N(0,(2m+1)^2V_{m,\infty}),&&-1/2<m<0,\\
\log T_n(\widehat\xi_n-\xi_0)&\toD N(0,V_{-1/2,\infty}),&&m=-1/2,
\end{align*}
where $V_{m,\infty}=\Var(-Z_0-\int_0^\infty m(1+t)^{m-1}Z_t\,\df t)>0$ for $m<0$. For $m>0$, also assume $h_nT_n^m\to0$. For $m<-1/2$, the unconstrained estimator converges in $L^2$ to $\xi_0+S_{m,\infty}/A_m(\infty)$, where $S_{m,\infty}$ is the centered Gaussian variable defining $V_{m,\infty}$. The constrained estimator converges to its projection onto $\overline\Xi$ and is inconsistent.
\end{revisedprop}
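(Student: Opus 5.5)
Because the family $\mu_\xi(t)=\xi g(t)$ with $g(t)=(1+t)^m$ is linear in $\xi$, I would bypass Theorem~\ref{thm:clt} and work with the explicit minimizer. Without the constraint, $H_n$ is minimized by
\[
\widetilde\xi_n=\frac{\sum_{i=1}^n g(t_{i-1})\Delta_i^nX}{h_nD_n},
\]
and the constrained estimator \eqref{estimator} is the projection of $\widetilde\xi_n$ onto the interval $\overline\Xi$. Whenever $\widetilde\xi_n\to^p\xi_0\in\Xi$, the two coincide with probability tending to one. The decomposition $\Delta_i^nX=h_n\xi_0g(t_{i-1})+\xi_0\rho_i^n+\Delta_i^nZ$, with $\rho_i^n=\int_{t_{i-1}}^{t_i}\{g(s)-g(t_{i-1})\}\,\df s$, gives
\[
\widetilde\xi_n-\xi_0=\frac{\xi_0\sum_i g(t_{i-1})\rho_i^n+S_n}{h_nD_n},\qquad S_n:=\sum_{i=1}^n g(t_{i-1})\Delta_i^nZ .
\]
The term $S_n$ is exactly centered Gaussian. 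The proof therefore reduces to three things: the asymptotics of $h_nD_n\sim A_m(T_n)$, which is already noted in the example; a bias bound; and the limit of $\Var(S_n)$. No central limit theorem is needed, since convergence of variances of Gaussian variables gives convergence in law.

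For the bias, $|\rho_i^n|\le \tfrac12 h_n^2\sup_{[t_{i-1},t_i]}|g'|$. Hence $\sum_i g(t_{i-1})|\rho_i^n|\lesssim h_n\int_0^{T_n}|gg'|\,\df t\lesssim h_n(1+T_n)^{2m}$ when $m\neq0$, and the bias vanishes for $m=0$. After normalization, for $m>0$ this contributes $h_nT_n^{2m}\cdot T_n^{m+1}/T_n^{2m+1}=h_nT_n^m\to0$, which is exactly the extra assumption in the statement. For $-1/2\le m<0$ it is $O(h_n)$ times a bounded or logarithmic factor, divided by the denominator, and so is negligible.

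For $S_n$, I would apply summation by parts:
\[
S_n=g(t_{n-1})Z_{T_n}-g(0)Z_0-\sum_{i=1}^{n-1}\{g(t_i)-g(t_{i-1})\}Z_{t_i}.
\]
This is a discretization of the continuous functional
\[
\Sigma_T=g(T)Z_T-Z_0-\int_0^Tg'(t)Z_t\,\df t .
\]
I would show $\Var(S_n-\Sigma_{T_n})=o(\text{scale})$ using continuity of $K$ and $|g(t_i)-g(t_{i-1})-h_ng'(t_{i-1})|\le Ch_n^2|g''|$. The variance of $\Sigma_T$ is then computed case by case, using that both kernels satisfy $K\ge0$ and $\int_0^\infty K<\infty$.
\begin{itemize}
\item For $0<m\le1$, the term $g(T)^2K(0)\sim K(0)T^{2m}$ dominates. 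The double integral $\iint g'(t)g'(s)K(|t-s|)\,\df s\,\df t$ is $O(\int_0^T g'^2)=O(T^{2m-1}+\log T)$. The cross term $g(T)\int_0^T g'(t)K(T-t)\,\df t$ is $O(T^{2m-1})$, and the $Z_0$ terms are $O(T^m)$. Dividing by $A_m(T_n)^2\sim T_n^{4m+2}/(2m+1)^2$ yields the variance $K(0)(2m+1)^2$.
\item For $m=0$, $\Sigma_T=Z_T-Z_0$, so its variance is $2K(0)-2K(T)\to2K(0)$, and the denominator is $T_n$.
\item For $m<0$, $g'$ is integrable and $g(T)Z_T\to0$ in $L^2$, so $\Sigma_T\to S_{m,\infty}=-Z_0-\int_0^\infty g'Z\,\df t$ in $L^2$. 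For $-1/2<m<0$ the denominator is $T_n^{2m+1}/(2m+1)$, and for $m=-1/2$ it is $\log T_n$.
\item For $m<-1/2$, $A_m(T_n)\to A_m(\infty)<\infty$, so $\widetilde\xi_n\to\xi_0+S_{m,\infty}/A_m(\infty)$ in $L^2$. The projection map is continuous, so the constrained estimator converges to the projection of this non-degenerate Gaussian limit and is therefore inconsistent.
\end{itemize}
Positivity of $V_{m,\infty}$ would follow from the positive definiteness of $K$, because $S_{m,\infty}$ is a nonzero linear functional of $Z$.

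The main obstacle is making the discretization step $S_n\approx\Sigma_{T_n}$ uniform at the correct scale, particularly for $m>0$. There the leading term $g(t_{n-1})Z_{T_n}$ is large, and the Riemann-sum errors in the double sum over $\Cov(Z_{t_i},Z_{t_j})$ must be $o(T_n^{2m})$ uniformly in $T_n$. My plan is to bound these errors using row sums of $|K|$, which are $O(h_n^{-1})$ by integrability of $K$, rather than through pointwise continuity. I would also verify separately for the Gaussian kernel that no off-diagonal cancellation changes the order, noting that its spectral density is positive at zero.
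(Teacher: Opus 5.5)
Your proposal is correct and follows essentially the paper's route: the explicit unconstrained minimizer $(S_n+e_n)/(h_nD_n)$, summation by parts for $S_n$, control of its variance through row sums of $K$ or the continuous functional, a bias bound that produces exactly the condition $h_nT_n^m\to0$, and Gaussianity in place of a central limit theorem. Three small points. For $m<0$ the bias sum is $O(h_n)$, since $\int_0^{T}|gg'|\,\df t\le 1/2$, not $O(h_n(1+T_n)^{2m})$ as you wrote. Positivity of $V_{m,\infty}$ should be argued as the paper does, from the strictly positive spectral densities of both kernels and the fact that the Fourier transform of the nonzero measure $-\delta_0-g'(t)\,\df t$ is not identically zero, rather than from positive definiteness of $K$ alone. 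Finally, the step you flag as the main obstacle is immediate from the identity $S_n-\Sigma_{T_n}=\sum_i\int_{t_{i-1}}^{t_i}g'(t)\{Z_t-Z_{t_i}\}\,\df t$ together with Minkowski's inequality.
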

The proof is given in Section~\ref{proof:power-rates}.}
\end{revisedexample}

\begin{revisedexample}[Periodic drift with Gaussian and Ornstein--Uhlenbeck kernels]\label{ex:periodic-cov}
\leavevmode\par
{ Let $w(t)=(\sin(\omega t),\cos(\omega t))^\top$, $\omega>0$, and $\mu_\xi(t)=\xi^\top w(t)$. Then $a_n=b_n=n$ and $I(\xi_0)=\mathrm{Id}_2/2$.
For either kernel, define
\[
C_K(\omega):=\int_0^\infty K(u)\cos(\omega u)\,\df u>0.
\]
Here we impose the sufficient sampling condition $T_nh_n\to0$, in addition to $h_n\to0$ and $T_n\to\infty$.
}
{
\begin{revisedprop}[Fixed-frequency periodic rate]\label{prop:periodic-rates}
For either kernel in Proposition~\ref{prop:power-rates}, let $h_n\to0$, $T_n\to\infty$, $T_nh_n\to0$, and $\xi_0\in\Xi$. For the fixed-frequency drift above, the original estimator satisfies
\[
\sqrt{T_n}(\widehat\xi_n-\xi_0)\toD N(0,4\omega^2C_K(\omega)\mathrm{Id}_2).
\]
Here $C_K(\omega)=ab/(b^2+\omega^2)$ for OU noise and $C_K(\omega)=a\sqrt{\pi/(2b)}e^{-\omega^2/(2b)}$ for Gaussian noise.
\end{revisedprop}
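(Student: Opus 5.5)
Since $\mu_\xi=\xi^\top w$ is linear in $\xi$, I would not invoke Theorem~\ref{thm:clt} at all and would instead work with the closed form of the estimator. Put $G_n:=h_n\sum_{i=1}^n w(t_{i-1})w(t_{i-1})^\top$. The unconstrained minimiser of \eqref{contrast} satisfies
\[
G_n(\widetilde\xi_n-\xi_0)=\sum_{i=1}^n w(t_{i-1})\bigl(r_i^n+\Delta_i^nZ\bigr)=:B_n+S_n.
\]
Three reductions come first.
\begin{itemize}
\item A Riemann sum argument with bounded $w'$ gives $G_n=\tfrac{T_n}{2}\mathrm{Id}_2+O(1)$.
\item Since $|r_i^n|\le Ch_n^2$, we have $|B_n|\le Cnh_n^2=CT_nh_n$. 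Hence $\sqrt{T_n}\,G_n^{-1}B_n=O(\sqrt{T_n}\,h_n)\to0$, because $T_nh_n\to0$ and $h_n\to0$.
\item Theorem~\ref{thm:consistency} applies, as noted in Example~\ref{ex:periodic}, and $\xi_0\in\Xi$ is interior. So the constrained estimator equals $\widetilde\xi_n$ with probability tending to one.
\end{itemize}
It therefore suffices to show $S_n/\sqrt{T_n}\toD N(0,\omega^2C_K(\omega)\mathrm{Id}_2)$. Then $\sqrt{T_n}(\widehat\xi_n-\xi_0)=2S_n/\sqrt{T_n}+o_p(1)$ gives the covariance $4\omega^2C_K(\omega)\mathrm{Id}_2$.

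Since $S_n$ is exactly centered Gaussian, only its covariance matrix needs to be identified; no CLT for dependent sums is required. I would sum by parts:
\[
S_n=w(t_{n-1})Z_{t_n}-w(0)Z_{t_0}-\sum_{i=1}^{n-1}\{w(t_i)-w(t_{i-1})\}Z_{t_i}.
\]
The boundary terms are $O_p(1)$ and hence negligible after dividing by $\sqrt{T_n}$. The main sum is compared with $\int_0^{T_n}w'(t)Z_t\,\df t$, where $w'(t)=\omega(\cos\omega t,-\sin\omega t)^\top$. For the continuous functional, the product-to-sum identities give
\[
\omega^2\int_0^T\!\!\int_0^T\cos(\omega t)\cos(\omega s)K(t-s)\,\df s\,\df t
=\tfrac{\omega^2}{2}\int_0^T\!\!\int_0^T\{\cos\omega(t-s)+\cos\omega(t+s)\}K(t-s)\,\df s\,\df t .
\]
The same identities apply to the $\sin\cdot\sin$ and $\sin\cdot\cos$ entries.
\begin{itemize}
\item The difference part contributes $T\int_{\R}K(u)\cos(\omega u)\,\df u/2+O(1)=T\,C_K(\omega)+O(1)$, using $\int_0^\infty u|K(u)|\,\df u<\infty$ for both kernels.
\item The sum-frequency part oscillates. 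It equals $\int_{\R}K(u)\bigl\{\int \cos\omega(2s+u)\,\df s\bigr\}\df u$, which is $O(1)$ by integrability of $K$.
\item The cross term $\sin\cdot\cos$ is likewise $O(1)$, since it reduces to $\int K(u)\sin(\omega u)\,\df u=0$ by evenness, plus an oscillatory remainder.
\end{itemize}
Thus $\Var\bigl(\int_0^{T_n}w'Z\bigr)/T_n\to\omega^2C_K(\omega)\mathrm{Id}_2$. Finally, $C_K$ is computed explicitly: $\int_0^\infty ae^{-bu}\cos\omega u\,\df u=ab/(b^2+\omega^2)$, and the Gaussian Fourier integral gives $a\sqrt{\pi/(2b)}\,e^{-\omega^2/(2b)}$. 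Both are positive.

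The main obstacle is the discretisation error
\[
E_n:=\sum_i\{w(t_i)-w(t_{i-1})\}Z_{t_i}-\int_0^{T_n}w'(t)Z_t\,\df t,
\]
which must satisfy $\Var(E_n)=o(T_n)$. A crude triangle inequality gives only $\|E_n\|_{L^2}=O(T_nh_n^{\beta/2})$, which is not enough when $\beta=1$. I would instead write $E_n$ as a single linear functional of $Z$, namely $E_n=\int\phi_n(t)\,Z(\df t)$ in the spectral sense. Then $\Var(E_n)=\int|\widehat\phi_n(\lambda)|^2f_K(\lambda)\,\df\lambda$, where $f_K$ is the spectral density, which is integrable and bounded for both kernels. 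Here $\phi_n$ is the difference between the step-function weight and $w'$. Writing $w(t_i)-w(t_{i-1})=\int_{t_{i-1}}^{t_i}w'$, the error has the form $\sum_i\int_{t_{i-1}}^{t_i}w'(t)(Z_{t_i}-Z_t)\,\df t$. For these pieces I would bound the covariances by the stationary second-difference estimates already used for A\ref{as:increment-covariance} (cf.~\eqref{short-range-c2}, \eqref{short-range-ou}). The aim is that within-block variances are $O(h_n^{2+\beta})$ and cross-block covariances are summable in $|i-j|$. This yields $\Var(E_n)=O(nh_n^{2+\beta}+T_nh_n)=o(T_n)$. If this row-sum bound proves delicate for the OU kernel, I would fall back on the exact OU covariance to evaluate the double sum directly.
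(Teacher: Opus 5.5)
Your argument is correct and, apart from one step, it is the paper's proof. The shared steps are: the closed-form least-squares representation with $G_n\sim\frac{T_n}{2}\mathrm{Id}_2$; the $O(\sqrt{T_n}\,h_n)$ bias bound; summation by parts against $Y_T=w(T)Z_T-w(0)Z_0-\int_0^Tw'(t)Z_t\,\df t$; the product-to-sum computation giving $\Var(Y_{T_n})/T_n\to\omega^2C_K(\omega)\mathrm{Id}_2$; and consistency to identify the constrained and unconstrained estimators.

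The step that differs is the discretization error, and your stated reason for leaving the paper's route there is an arithmetic slip. What you need is $\|E_n\|_{L^2}=o(\sqrt{T_n})$. The crude bound gives $\|E_n\|_{L^2}/\sqrt{T_n}=O\bigl((T_nh_n^{\beta})^{1/2}\bigr)$, which tends to zero for $\beta=1$ exactly because $T_nh_n\to0$ is assumed. This crude bound is precisely what the paper uses, so no spectral or block argument is needed for the proposition as stated.

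Your block-covariance refinement is nevertheless sound once written out.
\begin{itemize}
\item Within-block variances and adjacent-block covariances are $O(h_n^{2+\beta})$ each, by Cauchy--Schwarz.
\item For $|i-j|\ge2$, the covariance is a double integral of $K''$ over lags in $[(k-1)h_n,(k+1)h_n]$ with $k=|i-j|$. These lags are strictly positive, so the OU kink at the origin never enters.
\item Each such term is $O\bigl(h_n^4\sup_{[(k-1)h_n,(k+1)h_n]}|K''|\bigr)$, and the row sums are $O(h_n^3)$.
\end{itemize}
Hence $\Var(E_n)=O(T_nh_n^2)$ for both kernels.

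What the refinement buys is a weaker mesh condition. The discretization error becomes negligible with no coupling between $T_n$ and $h_n$. Only the bias requirement $\sqrt{T_n}\,h_n\to0$, i.e.\ $T_nh_n^2\to0$, remains, which for OU matches the condition $nh_n^{4-\beta}\to0$ of Theorem~\ref{thm:clt}.

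As written, however, that part of your proposal is only an announced aim with a fallback. Under the stated hypotheses you should simply close the proof with the one-line crude bound you already had.
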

The proof is given in Section~\ref{proof:periodic-rates}.}
\end{revisedexample}

{\red Table~\ref{tab:drift-rates} compares the results for the same estimator and the same two noise families. In particular, a constant drift and a periodic drift both have $a_n$ of order $n$, yet their score variances have orders $1$ and $T_n$. Their normalizations therefore differ: $T_n$ versus $\sqrt{T_n}$. This contrast isolates the role of covariance cancellation beyond deterministic accumulation.

For constant drift, $\widetilde\xi_n-\xi_0=(Z_{T_n}-Z_0)/T_n$ for every mesh. Refining the mesh at a fixed horizon leaves this estimator unchanged. For $m<-1/2$, even an expanding horizon together with $h_n\to0$ does not yield consistency of this estimator. These precise statements distinguish the role of high-frequency sampling from that of long-time drift accumulation.}

\begin{table}[htbp]
\centering
\begingroup\small
\caption{\red Normalizations $R_n$ for the original increment estimator under Gaussian or OU noise. In the consistent cases, $R_n(\widehat\xi_n-\xi_0)$ has the nondegenerate normal limit stated in Propositions~\ref{prop:power-rates}--\ref{prop:periodic-rates}. Every row assumes $h_n\to0$ and $T_n\to\infty$.}\label{tab:drift-rates}
\begin{tabular}{lll}
\hline
Drift & $R_n$ & Additional sufficient mesh condition\\
\hline
$(1+t)^m$, $0<m\le1$ & $T_n^{m+1}$ & $h_nT_n^m\to0$\\
Constant ($m=0$) & $T_n$ & None\\
$(1+t)^m$, $-1/2<m<0$ & $T_n^{2m+1}$ & None\\
$(1+t)^{-1/2}$ & $\log T_n$ & None\\
$(1+t)^m$, $m<-1/2$ & Inconsistent & No further mesh restriction\\
Fixed-frequency sine/cosine & $\sqrt{T_n}$ & $T_nh_n\to0$\\
\hline
\end{tabular}
\endgroup
\end{table}

%\section{Numerical experiments}\label{sec:simulation}
\section{Concluding remarks}\label{sec:conclusion}

In this paper, we studied statistical inference for deterministic drift structures in Gaussian process models under high-frequency observations.
The proposed methodology is based on local increment information and a least squares-type contrast constructed from adjacent increments.
The present framework treats the deterministic drift itself as the primary inferential target rather than merely a nuisance component to be removed by empirical centering.

{\red A central feature of the theory is that the asymptotic behavior of the estimator is governed jointly by deterministic drift accumulation and the full covariance structure of the weighted Gaussian increments.
Local noise roughness alone does not determine the convergence rate, because off-diagonal covariance terms may cancel the diagonal contribution or change its order.
The explicit calculations for power and fixed-frequency periodic drifts illustrate the resulting statistical regimes.
For both Gaussian and Ornstein--Uhlenbeck noise, constant and growing power drifts yield faster convergence than fixed-frequency periodic drifts. Consistent decaying power drifts converge more slowly than constant drifts. Compared with the periodic normalization $\sqrt{T_n}$, the normalization $T_n^{2m+1}$ for $-1/2<m<0$ is faster when $-1/4<m<0$, has the same order when $m=-1/4$, and is slower when $-1/2<m<-1/4$. At $m=-1/2$ the normalization is $\log T_n$, and for $m<-1/2$ the estimator is inconsistent.
These results distinguish the role of the observation horizon in stochastic accuracy from that of the sampling mesh in controlling discretization error.
The inconsistency result is specific to the proposed increment estimator and does not establish a general impossibility of estimating localized drifts.}

The framework developed here is intentionally formulated under relatively general assumptions.
In particular, directly Riemann integrable drifts appear as one representative class rather than a fundamental assumption of the theory.
{\red The examples distinguish verification of deterministic accumulation conditions from application of the consistency and asymptotic normality results.
In particular, the localized drift examples do not automatically satisfy the sufficient consistency conditions for Gaussian or Ornstein--Uhlenbeck noise.
The full score covariance condition must also be checked before a convergence rate is asserted; the additional corollary provides a normalization adapted to that covariance without changing the estimator.}
This formulation allows us to examine a broad class of nonstationary deterministic structures while making the scope of the asymptotic conclusions explicit.

\paragraph{Future work.}
Several possible extensions of the present framework remain for future investigation.

First, it would be interesting to incorporate trajectory fitting-type estimators based on deterministic approximations of stochastic dynamics; see {\color{black}Kasonga~\cite{k90}}.
Instead of approximating the local drift contribution by the first-order Euler expansion,
\[
\int_{t_{i-1}}^{t_i} \mu_\xi(s)\,\df s \approx \mu_\xi(t_{i-1})h_n,
\]
one may fit the observed trajectory to the solution of a deterministic differential equation generated by the drift itself.
Such an approach may reduce discretization bias and improve finite-sample performance, especially when the drift function is sufficiently smooth.
{\red However, its stochastic error would depend on the weighting and covariance structure induced by the alternative estimating procedure.
Whether trajectory fitting changes the convergence rate or asymptotic variance therefore requires a separate analysis; the rate calculations for the present increment estimator do not settle this question.}
A rigorous comparison between local contrast methods and trajectory fitting procedures remains an interesting direction for future work.

Second, although the present paper focuses primarily on drift estimation, extension to simultaneous inference for both drift and covariance parameters remains an important open problem.
In particular, after obtaining a consistent estimator of the drift parameter, one may construct residual-based estimators for covariance or kernel parameters through residual processes of the form
\[
\widehat{Z}_t = X_t - \int_0^t \mu_{\widehat{\xi}_n}(s)\,\df s.
\]
Such procedures would naturally involve ergodic properties and long-time averaging behavior of the residual process.
The asymptotic interaction between the first-stage drift estimation error and the second-stage kernel estimation remains to be clarified.
{\red Feasible estimation of the full score covariance would also be needed to construct confidence regions from the limiting distributions obtained here.}
A systematic two-step asymptotic theory in this direction would provide a more unified framework for simultaneous inference on deterministic and stochastic structures.

Third, although the present paper considers stationary Gaussian processes, the local contrast construction itself does not fundamentally rely on Gaussianity.
It is therefore natural to investigate extensions to more general ergodic stochastic processes, including ergodic diffusion processes and related semimartingale models.
In such settings, local increment structures may still provide tractable estimating equations without requiring global likelihood evaluation.
{\red The present proofs do use Gaussianity, both for uniform control of the stochastic contrast and for the normal limits of deterministic weighted increment sums.
Extensions beyond Gaussian noise would therefore require additional arguments, potentially involving mixing properties, ergodic theorems, or martingale approximations, depending on the model.}
We hope that the present work provides a useful starting point for further investigation of drift inference under dependent stochastic environments.

\section{Proofs}\label{sec:proofs}

\subsection{Proof of Theorem~\ref{thm:consistency}}

Define the normalized contrast difference
\begin{align}
\ol{H}_n(\xi):=\frac1{a_nh_n^2}\bigl(H_n(\xi)-H_n(\xi_0)\bigr).
\label{normalized-contrast}
\end{align}
Set $d_i(\xi)=\mu_\xi(t_{i-1})-\mu_{\xi_0}(t_{i-1})$ and let $r_i^n$ be the discretization remainder defined in Section~\ref{sec:models}. Expanding the contrast gives
\begin{align*}
\ol H_n(\xi)
&=\frac1{a_n}\sum_i d_i(\xi)^2-\frac2{a_nh_n}\sum_i d_i(\xi)\Delta_i^nZ-\frac2{a_nh_n}\sum_i d_i(\xi)r_i^n\\
&=:J_{1,n}(\xi)+J_{2,n}(\xi)+J_{3,n}(\xi).
\end{align*}

\noindent \underline{As for $J_{1,n}$:} Assumption~B\ref{as:separation} yields 
\[
\sup_{\xi\in \Xi}\left|J_{1,n}(\xi) -Q(\xi,\xi_0)\right|\to0.
\]

\noindent \underline{As for $J_{2,n}$:}

By the mean value theorem and B\ref{as:derivative-growth},
\begin{align*}
\Var\left(\sum_{i=1}^n\{d_i(\xi)-d_i(\eta)\}\Delta_i^nZ\right)
&\le Ch_n^\alpha |\xi-\eta|^2\sum_{i=1}^nM(t_{i-1})^2 
=O(h_n^\alpha b_n|\xi-\eta|^2).
\end{align*}
{\red Since $b_n=O(a_n)$, the canonical metric of the centered Gaussian field $J_{2,n}$ satisfies
\[
\{\E|J_{2,n}(\xi)-J_{2,n}(\eta)|^2\}^{1/2}
\le C s_n|\xi-\eta|,\qquad s_n=(a_nh_n^{2-\alpha})^{-1/2}.
\]
The canonical covering numbers are at most $C(1+s_n/\varepsilon)^p$, since $\overline\Xi$ is a compact subset of $\R^p$. The field is continuous and $J_{2,n}(\xi_0)=0$, so the Gaussian entropy bound {\color{black}\cite{d67}} gives
\[
\E\sup_{\xi\in\overline\Xi}|J_{2,n}(\xi)|
\le C\int_0^{Cs_n}\sqrt{\log\{C(1+s_n/\varepsilon)^p\}}\,\df\varepsilon
\le Cs_n\to0.
\]
Thus $\sup_{\xi\in\overline\Xi}|J_{2,n}(\xi)|\toP0$.}

\noindent \underline{As for $J_{3,n}$:} Noticing by B\ref{as:time-regularity} that 
\begin{align}
|r_i^n|\le Ch_n^2. \label{r_i^n}, 
\end{align}
we have 
\begin{align*}
\sup_{\xi\in\overline{\Xi}}\left|\frac1{a_nh_n}\sum_{i=1}^nd_i(\xi)r_i^n\right|
&\le\frac{Ch_n}{a_n}\sup_{\xi\in\overline{\Xi}}\sum_{i=1}^n|d_i(\xi)| 
\le\frac{Ch_n n^{1/2}}{a_n}\sup_{\xi\in\overline{\Xi}}\left(\sum_{i=1}^nd_i(\xi)^2\right)^{1/2} \\
&\le C\left(\frac{nh_n^2}{a_n}\right)^{1/2}\to0.
\end{align*}
Consequently,
\[
\sup_{\xi\in\overline{\Xi}}|\ol{H}_n(\xi)-Q(\xi,\xi_0)|\toP0.
\]
Since $Q$ is continuous, $Q(\xi,\xi_0)\ge0$, and
\[
Q(\xi,\xi_0)=0 \iff \xi=\xi_0,
\]
the standard M-estimation theory, e.g., {\color{black}van der Vaart~\cite{v98}}, Theorem~5.7, yields
\[
\wh{\xi}_n\toP\xi_0.
\]
This completes the proof.

\subsection{Proof of Theorem~\ref{thm:clt}}

Write $w_{i,n}=\partial_\xi\mu_{\xi_0}(t_{i-1})$, $\Psi_n=\partial_\xi H_n$, and
\[
\mathcal J_n=\frac1{2a_nh_n^2}\int_0^1\partial_\xi\Psi_n\{\xi_0+u(\widehat\xi_n-\xi_0)\}\,\df u.
\]
{\red Consistency and Lemma~\ref{lem:hessian-xi} imply that the event $A_n=\{\widehat\xi_n\in\Xi,\ \mathcal J_n\text{ is invertible}\}$ has probability tending to one and $\mathcal J_n\toP I(\xi_0)$.} On $A_n$, the score equation and Taylor's formula give, with $r_n=\sqrt{a_nh_n^{2-\beta}}$,
\[
r_n(\widehat\xi_n-\xi_0)
=\mathcal J_n^{-1}\frac1{\sqrt{a_nh_n^\beta}}\sum_iw_{i,n}(\Delta_i^nZ+r_i^n).
\]
By B5, \eqref{r_i^n}, and Cauchy--Schwarz,
\[
\frac{\left|\sum_iw_{i,n}r_i^n\right|}{\sqrt{a_nh_n^\beta}}
\le Ch_n^{2-\beta/2}\sqrt{nb_n/a_n}\to0,
\]
since $b_n=O(a_n)$ and $nh_n^{4-\beta}\to0$. The random vector $(a_nh_n^\beta)^{-1/2}\sum_iw_{i,n}\Delta_i^nZ$ is centered Gaussian and its covariance tends to $\Gamma(\xi_0)$ by B7. Slutsky's theorem proves the stated limit on $A_n$. {\red Its complement is negligible because, for every $\varepsilon>0$,
\[
P\bigl(|r_n(\widehat\xi_n-\xi_0)1_{A_n^c}|>\varepsilon\bigr)\le P(A_n^c)\to0.
\]}

{\red\subsection{Proof of Corollary~\ref{cor:full-score}}}
{ Consistency follows from Theorem~\ref{thm:consistency}. The Hessian argument in Lemma~\ref{lem:hessian-xi} uses A1, B1--B6, B8, and the two consistency balances; it does not require B7. It therefore gives the same limit $I(\xi_0)$. In the score expansion above, replace $\sqrt{a_nh_n^\beta}$ by $\sqrt{v_n}$. The centered Gaussian term converges by the assumed full covariance limit. The deterministic remainder is bounded by
\[
\frac1{\sqrt{v_n}}\left|\sum_i\partial_\xi\mu_{\xi_0}(t_{i-1})r_i^n\right|
\le C\left(\frac{nb_nh_n^4}{v_n}\right)^{1/2}\to0.
\]
The same Taylor expansion and Slutsky argument, now with $R_n=a_nh_n/\sqrt{v_n}$, prove the assertion.}

{\red\subsection{Proof of Proposition~\ref{prop:power-rates}}\label{proof:power-rates}}

{
For the unconstrained minimizer of the original contrast,
\begin{equation}
\widetilde\xi_n-\xi_0=\frac{S_n+e_n}{h_nD_n},\quad
S_n=\sum_i g(t_{i-1})\Delta_i^nZ,\quad
e_n=\sum_i g(t_{i-1})r_i^n.\label{power-score-exact}
\end{equation}
Here $r_i^n$ is the discretization remainder, not zero in general.
Discrete summation by parts gives
\[
S_n=g(t_{n-1})Z_{T_n}-Z_0
-\sum_{i=1}^{n-1}\{g(t_i)-g(t_{i-1})\}Z_{t_i}.
\]
Both kernels are integrable and satisfy $\sup_i\sum_j|K((i-j)h_n)|=O(h_n^{-1})$. For $m>0$, the variance of the last sum is at most
\[
C h_n^{-1}\sum_{i=1}^{n-1}\{g(t_i)-g(t_{i-1})\}^2
\le C\int_0^{T_n}|g'(t)|^2\,\df t=o(T_n^{2m}).
\]
It follows that $\Var(S_n)\sim K(0)T_n^{2m}$. For $m=0$, $S_n=Z_{T_n}-Z_0$ exactly and $\Var(S_n)\to2K(0)$.
For $m<0$, mean-square continuity on compact intervals and $\int_0^\infty|g'(t)|\,\df t<\infty$ give
\[
S_n\to S_{m,\infty}:=-Z_0-\int_0^\infty g'(t)Z_t\,\df t\quad\hbox{in }L^2,
\qquad V_{m,\infty}:=\Var(S_{m,\infty})>0.
\]
Indeed, the tail has $L^2$ norm bounded by $\sqrt{K(0)}$ times the tail variation of $g$, uniformly in the mesh. Positivity follows because both kernels have strictly positive spectral densities and the finite signed measure $-\delta_0-g'(t)\,\df t$ is nonzero.

The deterministic remainder in \eqref{power-score-exact} satisfies
\[
|e_n|\le C h_n\int_0^{T_n}(1+t)^{2m-1}\,\df t
=\begin{cases}O(h_nT_n^{2m}),&m>0,\\O(h_n),&m<0,\end{cases}
\]
and $e_n=0$ for $m=0$. Thus $e_n=o(T_n^m)$ for $0<m\le1$ if $h_nT_n^m\to0$, and $e_n=o(1)$ for $m<0$ under $h_n\to0$ alone. Gaussianity yields the four normal limits in Proposition~\ref{prop:power-rates}. The unconstrained estimator is consistent in these regimes, so it agrees with the constrained estimator with probability tending to one. Under the common sufficient condition $T_nh_n\to0$, all these limits also follow from Corollary~\ref{cor:full-score}, with $v_n=T_n^{2m}$ for $m>0$ and $v_n=1$ for $m\le0$.

For $m<-1/2$, $h_nD_n\to A_m(\infty)$, so \eqref{power-score-exact} gives the stated nondegenerate limit. Projection onto $\overline\Xi$ preserves inconsistency since $\xi_0$ is interior.

These calculations also check the original B7 directly. For OU noise, its denominator is $A_m(T_n)$: the ratio tends to zero for $m\ge-1/2$. For $m<-1/2$ it has a positive limit, but the consistency condition of Theorem~\ref{thm:clt} fails. For Gaussian noise its denominator is $h_nA_m(T_n)$, and under the theorem's condition $T_nh_n\to0$ the ratio diverges. Covariance cancellation under differencing is related to the regression phenomena studied by {\color{black}Phillips~\cite{p22}}; here the calculation uses a fixed continuous-time covariance kernel and a shrinking observation mesh.
\hfill$\square$}

{\red\subsection{Proof of Proposition~\ref{prop:periodic-rates}}\label{proof:periodic-rates}}

{
We verify the full covariance limit
\begin{equation}
\frac1{T_n}\Var\left(\sum_{i=1}^nw(t_{i-1})\Delta_i^nZ\right)
\longrightarrow\omega^2C_K(\omega)\mathrm{Id}_2.\label{periodic-score-limit}
\end{equation}
To see this, define the deterministic integration-by-parts integral
\[
Y_T:=w(T)Z_T-w(0)Z_0-\int_0^Tw'(t)Z_t\,\df t.
\]
No semimartingale assumption is needed. Summation by parts yields exactly
\[
S_n-Y_{T_n}=\sum_{i=1}^n\int_{t_{i-1}}^{t_i}w'(t)\{Z_t-Z_{t_i}\}\,\df t,
\quad S_n=\sum_{i=1}^nw(t_{i-1})\Delta_i^nZ.
\]
A2 and boundedness of $w'$ imply $\|S_n-Y_{T_n}\|_{L^2}\le CT_nh_n^{\beta/2}$.
Thus $\|S_n-Y_{T_n}\|_{L^2}/\sqrt{T_n}\to0$, since $T_nh_n^\beta\to0$ for $\beta=1,2$ under our sampling condition.
Trigonometric product identities and integrability of $K$ give
\[
\frac1T\Var\left(\int_0^T\cos(\omega t)Z_t\,\df t\right)\to C_K(\omega),
\]
with the same limit for sine and zero limiting cross covariance. Indeed, the nonoscillatory part is $\int_0^T(T-u)K(u)\cos(\omega u)\,\df u$, and the remaining part is bounded by a constant times $\int_0^\infty|K(u)|\,\df u$. Boundary variances and their covariances with the integrals are $O(1)$. This proves \eqref{periodic-score-limit}.

Let $D_n=\sum_iw(t_{i-1})w(t_{i-1})^\top$. For the unconstrained least-squares estimator,
\[
\widetilde\xi_n-\xi_0=(h_nD_n)^{-1}
\left\{S_n+\sum_iw(t_{i-1})r_i^n\right\}.
\]
Since $D_n/n\to\mathrm{Id}_2/2$ and $\sum_iw(t_{i-1})r_i^n=O(T_nh_n)$, the scaled bias is $O(\sqrt{T_n}h_n)=o(1)$. Gaussianity and \eqref{periodic-score-limit} therefore yield
\begin{equation}
\sqrt{T_n}(\widehat\xi_n-\xi_0)
\toD{\cal N}\left(0,4\omega^2C_K(\omega)\mathrm{Id}_2\right).\label{periodic-clt}
\end{equation}
The unconstrained estimator is consistent, so it is in $\Xi$ with probability tending to one, justifying the same limit for $\widehat\xi_n$.

For the OU kernel $K(t)=ae^{-b|t|}$,
\[
C_K(\omega)=\frac{ab}{b^2+\omega^2},\quad
\Gamma(\xi_0)=\frac{ab\omega^2}{b^2+\omega^2}\mathrm{Id}_2,
\quad I^{-1}\Gamma I^{-1}=\frac{4ab\omega^2}{b^2+\omega^2}\mathrm{Id}_2.
\]
Here B7 holds since $a_nh_n^\beta=T_n$, and Theorem~\ref{thm:clt} also applies under the stated sampling condition.
For the Gaussian kernel $K(t)=ae^{-bt^2/2}$,
\[
C_K(\omega)=a\sqrt{\frac\pi{2b}}e^{-\omega^2/(2b)}.
\]
In this case $V_n/(a_nh_n^\beta)\sim h_n^{-1}\omega^2C_K(\omega)\mathrm{Id}_2$, so B7 fails. Equation~\eqref{periodic-clt} also follows from Corollary~\ref{cor:full-score} with $v_n=T_n$ under the stated sampling condition. Both limiting variances depend on the full kernel at frequency $\omega$, not just on $c_K$.
\hfill$\square$}

\subsection{Auxiliary lemmas}

\begin{lem}\label{lem:hessian-xi}
Suppose the assumptions of Theorem~\ref{thm:clt}{\red  or Corollary~\ref{cor:full-score}}.
Then
\[
\frac1{2a_nh_n^2}\int_0^1 \p_\xi\Psi_n\{\xi_0+u(\widehat{\xi}_n-\xi_0)\}\,\df u\toP I(\xi_0).
\]
\end{lem}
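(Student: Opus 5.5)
We compute the Hessian explicitly. Since $H_n(\xi)=\sum_i(\Delta_i^nX-\mu_\xi(t_{i-1})h_n)^2$, we have
\[
\p_\xi\Psi_n(\xi)=2h_n^2\sum_{i=1}^n\p_\xi\mu_\xi(t_{i-1})\p_\xi\mu_\xi(t_{i-1})^\top
-2h_n\sum_{i=1}^n\p_\xi^2\mu_\xi(t_{i-1})\bigl(\Delta_i^nX-\mu_\xi(t_{i-1})h_n\bigr).
\]
Substituting $\Delta_i^nX=h_n\mu_{\xi_0}(t_{i-1})+r_i^n+\Delta_i^nZ$ gives
\[
\frac1{2a_nh_n^2}\p_\xi\Psi_n(\xi)=I_n(\xi)+\frac1{a_n}\sum_i\p_\xi^2\mu_\xi(t_{i-1})d_i(\xi)-\frac1{a_nh_n}\sum_i\p_\xi^2\mu_\xi(t_{i-1})r_i^n-\frac1{a_nh_n}\sum_i\p_\xi^2\mu_\xi(t_{i-1})\Delta_i^nZ,
\]
where $I_n(\xi)=a_n^{-1}\sum_i\p_\xi\mu_\xi\p_\xi\mu_\xi^\top(t_{i-1})$ and $d_i$ is as in the consistency proof. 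Since $\widehat\xi_n\toP\xi_0$ by Theorem~\ref{thm:consistency}, with probability tending to one the whole segment $\{\xi_0+u(\widehat\xi_n-\xi_0)\}$ lies in the bounded convex neighbourhood $U$ of B\ref{as:second-derivative-growth}. It therefore suffices to show that each of the four terms converges to its target uniformly over $\xi$ in shrinking balls $B(\xi_0,\delta_n)\subset U$, and then integrate in $u$.

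For the first term, write $I_n(\xi)-I_n(\xi_0)$ through the mean value theorem. Each entry is bounded by $a_n^{-1}\sum_i\{|\p_\xi\mu_\xi-\p_\xi\mu_{\xi_0}|\,(|\p_\xi\mu_\xi|+|\p_\xi\mu_{\xi_0}|)\}(t_{i-1})\le 2|\xi-\xi_0|\,a_n^{-1}\sum_iN(t_{i-1})M(t_{i-1})$. By Cauchy--Schwarz this is at most $C|\xi-\xi_0|\{a_n^{-1}\sum N^2\}^{1/2}\{a_n^{-1}\sum M^2\}^{1/2}=O(|\xi-\xi_0|)$, using B\ref{as:second-derivative-growth}, B\ref{as:derivative-growth} and $b_n=O(a_n)$. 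Together with B\ref{as:information}, this term tends to $I(\xi_0)$. The second term is bounded by $\{a_n^{-1}\sum N^2\}^{1/2}\{a_n^{-1}\sum d_i(\xi)^2\}^{1/2}$, and $|d_i(\xi)|\le M(t_{i-1})|\xi-\xi_0|$, so it is $O(|\xi-\xi_0|)$. The third term is bounded, using $|r_i^n|\le Ch_n^2$ and Cauchy--Schwarz, by $C(h_n/a_n)\sqrt n\,(\sum N^2)^{1/2}\le C(nh_n^2/a_n)^{1/2}\to0$, uniformly in $\xi\in U$.

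The main obstacle is the stochastic term $G_n(\xi):=(a_nh_n)^{-1}\sum_i\p_\xi^2\mu_\xi(t_{i-1})\Delta_i^nZ$, which must vanish uniformly on $U$. Each entry is a centered Gaussian field. By A\ref{as:increment-covariance},
\[
\Var(G_n(\xi))\le (a_nh_n)^{-2}Ch_n^\alpha\sum_iN(t_{i-1})^2=O\bigl((a_nh_n^{2-\alpha})^{-1}\bigr)\to0.
\]
The Hölder condition in B\ref{as:second-derivative-growth} gives the increment bound $\{\E|G_n(\xi)-G_n(\eta)|^2\}^{1/2}\le Cs_n|\xi-\eta|^\rho$ with $s_n=(a_nh_n^{2-\alpha})^{-1/2}$. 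Because $U$ is bounded in $\R^p$, the canonical covering numbers are $\le C(1+(s_n/\varepsilon)^{1/\rho})^p$. Dudley's entropy bound, as in the proof of Theorem~\ref{thm:consistency}, then yields $\E\sup_{\xi\in U}|G_n(\xi)|\le Cs_n+C\int_0^{Cs_n}\sqrt{\log(1+(s_n/\varepsilon)^{1/\rho})}\,\df\varepsilon\le Cs_n\to0$. The entropy integral is finite because the logarithmic singularity is integrable. The exponent $\rho>0$ is exactly what keeps this step working.

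Combining the four bounds, the integrand converges to $I(\xi_0)$ uniformly over the segment on an event of probability tending to one, so the integral over $u$ converges in probability to $I(\xi_0)$. Only A\ref{as:increment-covariance}, B\ref{as:drift-cont}--B\ref{as:information}, B\ref{as:second-derivative-growth} and the balances $a_nh_n^{2-\alpha}\to\infty$, $nh_n^2/a_n\to0$, $b_n=O(a_n)$ are used, and these hold under both the hypotheses of Theorem~\ref{thm:clt} and those of Corollary~\ref{cor:full-score}.
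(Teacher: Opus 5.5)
Your proposal is correct and takes essentially the same route as the paper. You use the same Hessian expansion, split into the outer-product term plus the parameter-shift, discretization and Gaussian parts. You handle the first three with the mean value theorem on the convex $U$, the bound $|r_i^n|\le Ch_n^2$, and Cauchy--Schwarz with the envelopes $M$, $N$ and $b_n=O(a_n)$. You control the Gaussian field $G_n$ uniformly on $U$ through A1, the H\"older part of B8 and Dudley's entropy bound, which needs only $a_nh_n^{2-\alpha}\to\infty$. The remaining differences are cosmetic: you argue uniformity over shrinking balls rather than bounding directly by $|\widehat\xi_n-\xi_0|$ on the event that the segment lies in $U$, and your covering-number bound is stated in a form equivalent up to constants.
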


\begin{proof}
Put $\xi_n(u):=\xi_0+u(\widehat{\xi}_n-\xi_0)$ for $0\le u\le1$.
Let $U$ be the neighborhood of $\xi_0$ in Assumption~B\ref{as:second-derivative-growth}.
Choose $\eta>0$ such that $\{\xi\in\R^p:|\xi-\xi_0|<\eta\}\subset U$.
Define
\[
C_n:=\{\xi_n(u)\in U \text{ for all } u\in[0,1]\}.
\]
Since $\sup_{0\le u\le1}|\xi_n(u)-\xi_0|\le|\widehat{\xi}_n-\xi_0|$ and $\widehat{\xi}_n\toP\xi_0$, we have $P(C_n)\to1$.
Recall that
\[
H_n(\xi)=\sum_{i=1}^n\{\Delta_i^nX-h_n\mu_\xi(t_{i-1})\}^2.
\]
Then
\[
\Psi_n(\xi)=-2h_n\sum_{i=1}^n\{\Delta_i^nX-h_n\mu_\xi(t_{i-1})\}\p_\xi\mu_\xi(t_{i-1}).
\]
Hence
\[
\p_\xi\Psi_n(\xi)=2h_n^2\sum_{i=1}^n\p_\xi\mu_\xi(t_{i-1})\p_\xi\mu_\xi(t_{i-1})^\top-2h_n\sum_{i=1}^n\{\Delta_i^nX-h_n\mu_\xi(t_{i-1})\}\p_\xi^2\mu_\xi(t_{i-1}).
\]
Therefore,
\[
\frac1{2a_nh_n^2}\int_0^1\p_\xi\Psi_n(\xi_n(u))\,\df u=B_n+R_n,
\]
where
\[
B_n:=\frac1{a_n}\sum_{i=1}^n\int_0^1\p_\xi\mu_{\xi_n(u)}(t_{i-1})\p_\xi\mu_{\xi_n(u)}(t_{i-1})^\top\,\df u
\]
and
\[
R_n:=-\frac1{a_nh_n}\sum_{i=1}^n\int_0^1\{\Delta_i^nX-h_n\mu_{\xi_n(u)}(t_{i-1})\}\p_\xi^2\mu_{\xi_n(u)}(t_{i-1})\,\df u.
\]
First, we show that $B_n\toP I(\xi_0)$.
On $C_n$, by the mean value theorem, Assumption~B\ref{as:derivative-growth}, and Assumption~B\ref{as:second-derivative-growth}, we have
\[
\left|B_n-\frac1{a_n}\sum_{i=1}^n\p_\xi\mu_{\xi_0}(t_{i-1})\p_\xi\mu_{\xi_0}(t_{i-1})^\top\right|\le C|\widehat{\xi}_n-\xi_0|\frac1{a_n}\sum_{i=1}^nM(t_{i-1})N(t_{i-1}).
\]
By Cauchy-Schwarz inequality,
\[
\frac1{a_n}\sum_{i=1}^nM(t_{i-1})N(t_{i-1})\le\left\{\frac1{a_n}\sum_{i=1}^nM(t_{i-1})^2\right\}^{1/2}\left\{\frac1{a_n}\sum_{i=1}^nN(t_{i-1})^2\right\}^{1/2}=O(1).
\]
Since $\widehat{\xi}_n\toP\xi_0$, it follows that
\[
\left|B_n-\frac1{a_n}\sum_{i=1}^n\p_\xi\mu_{\xi_0}(t_{i-1})\p_\xi\mu_{\xi_0}(t_{i-1})^\top\right|1_{C_n}\toP0.
\]
By the definition of $I_n$ and Assumption~B\ref{as:information},
\[
\frac1{a_n}\sum_{i=1}^n\p_\xi\mu_{\xi_0}(t_{i-1})\p_\xi\mu_{\xi_0}(t_{i-1})^\top=I_n\to I(\xi_0).
\]
Thus $B_n1_{C_n}\toP I(\xi_0)$.
Next, we show that $R_n=o_P(1)$ on $C_n$.
Write
\[
\Delta_i^nX-h_n\mu_{\xi_n(u)}(t_{i-1})=\Delta_i^nZ+\left\{\int_{t_{i-1}}^{t_i}\mu_{\xi_0}(s)\,\df s-h_n\mu_{\xi_0}(t_{i-1})\right\}+h_n\{\mu_{\xi_0}(t_{i-1})-\mu_{\xi_n(u)}(t_{i-1})\}.
\]
Correspondingly, write $R_n=R_{n,1}+R_{n,2}+R_{n,3}$.
{\red For the Gaussian part, introduce the deterministic-index Gaussian field
\[
G_n(\xi)=\frac1{a_nh_n}\sum_{i=1}^n\p_\xi^2\mu_\xi(t_{i-1})\Delta_i^nZ,\quad\xi\in U.
\]
For each matrix entry, A1 and B8 imply
\[
\sup_{\xi\in U}\E|G_n(\xi)|^2\le\frac{C}{a_nh_n^{2-\alpha}},\qquad
\E|G_n(\xi)-G_n(\eta)|^2\le\frac{C|\xi-\eta|^{2\rho}}{a_nh_n^{2-\alpha}}.
\]
The canonical metric therefore has covering numbers bounded by $C(1+s_n/\varepsilon)^{p/\rho}$, where $s_n=(a_nh_n^{2-\alpha})^{-1/2}$. For each entry $G_{n,rs}$, the variance bound gives $\E|G_{n,rs}(\xi_0)|\le Cs_n$. Applying Dudley's entropy inequality (see {\color{black}Dudley~\cite{d67}}) to the field anchored at $\xi_0$ gives
\begin{align*}
\E\sup_{\xi\in U}|G_{n,rs}(\xi)|
&\le \E|G_{n,rs}(\xi_0)|
 +\E\sup_{\xi\in U}|G_{n,rs}(\xi)-G_{n,rs}(\xi_0)|\\*
&\le Cs_n+C\int_0^{Cs_n}\sqrt{\log\{C(1+s_n/\varepsilon)^{p/\rho}\}}\,\df\varepsilon
\le Cs_n.
\end{align*}
Summing over the finitely many entries yields $\E\sup_{\xi\in U}|G_n(\xi)|\le Cs_n$ for a matrix norm.
Continuity ensures separability and measurability of these suprema. On $C_n$, $|R_{n,1}|\le\sup_{\xi\in U}|G_n(\xi)|$. Since either set of assumptions implies $a_nh_n^{2-\alpha}\to\infty$, we obtain $R_{n,1}1_{C_n}\toP0$. This uses $\alpha$, not $\beta$, and does not move the random coefficients outside an expectation.}

For the time-discretization part, Assumption~B\ref{as:time-regularity} gives
\[
\left|\int_{t_{i-1}}^{t_i}\mu_{\xi_0}(s)\,\df s-h_n\mu_{\xi_0}(t_{i-1})\right|\le Ch_n^2.
\]
Hence, on $C_n$,
\[
|R_{n,2}|\le Ch_n\frac1{a_n}\sum_{i=1}^nN(t_{i-1})\le {\red C\left(\frac{nh_n^2}{a_n}\right)^{1/2}\left\{\frac1{a_n}\sum_{i=1}^nN(t_{i-1})^2\right\}^{1/2}.}
\]
Thus $R_{n,2}1_{C_n}\toP0$.
For the parameter-shift part, the mean value theorem gives
\[
\sup_{0\le u\le1}|\mu_{\xi_n(u)}(t_{i-1})-\mu_{\xi_0}(t_{i-1})|\le M(t_{i-1})|\widehat{\xi}_n-\xi_0|.
\]
Therefore, on $C_n$,
\[
|R_{n,3}|\le C|\widehat{\xi}_n-\xi_0|\frac1{a_n}\sum_{i=1}^nM(t_{i-1})N(t_{i-1}).
\]
By Cauchy-Schwarz inequality and the growth bounds for $M$ and $N$, the last factor is $O(1)$.
Since $\widehat{\xi}_n\toP\xi_0$, we obtain $R_{n,3}1_{C_n}\toP0$.
Consequently, $R_n1_{C_n}=o_P(1)$.
Combining the convergence of $B_n$ and $R_n$, we obtain
\[
\left\{\frac1{2a_nh_n^2}\int_0^1\p_\xi\Psi_n\{\xi_0+u(\widehat{\xi}_n-\xi_0)\}\,\df u-I(\xi_0)\right\}1_{C_n}\toP0.
\]
Since $P(C_n)\to1$, we conclude that
\[
\frac1{2a_nh_n^2}\int_0^1\p_\xi\Psi_n\{\xi_0+u(\widehat{\xi}_n-\xi_0)\}\,\df u\toP I(\xi_0).
\]
\end{proof}

\noindent\begin{minipage}{\textwidth}
\section*{Statements and Declarations}
{\red
\paragraph{Funding}
This work is partially supported by Japan Society for the Promotion of Science (JSPS) KAKENHI, Grant-in-Aid for Scientific Research (B) (Grant No.~24K02907), Grant-in-Aid for Scientific Research (C) (Grant No.~24K06875), and Japan Science and Technology Agency (JST) CREST (Grant No.~JPMJCR2115).

\paragraph{Competing interests}
The author has no relevant financial or non-financial interests to disclose.
}
\end{minipage}

\end{document}